     \def\section{\@startsection{section}{1}%
     \z@{.7\linespacing\@plus\linespacing}{.5\linespacing}%
     {\bfseries
     \centering
     }}
     \def\@secnumfont{\bfseries}
\newtheorem{theorem}{Theorem}[section]
\newtheorem{lemma}[theorem]{Lemma}
\newtheorem{corollary}[theorem]{Corollary}
\theoremstyle{definition}
\newtheorem{definition}[theorem]{Definition}
\theoremstyle{remark}
\newtheorem{remark}[theorem]{Remark}
\numberwithin{equation}{section}
\begin{document}
\markboth{{P. Jin, B. R\"udiger and C. Trabelsi}}{Stochastics: An 
International Journal of Probability and Stochastic Processes}
\title[Positive Harris recurrence and exponential ergodicity  of the BAJD]{Positive Harris recurrence and exponential ergodicity  of the basic affine jump-diffusion}

\author[P. Jin]{Peng Jin}
\address{Peng Jin: Fachbereich C, Bergische Universit\"at Wuppertal, 42119 Wuppertal, Germany}
\email{jin@uni-wuppertal.de}

\author[B. R\"udiger]{Barbara R\"udiger}
\address{Barbara R\"udiger: Fachbereich C, Bergische Universit\"at Wuppertal, 42119 Wuppertal, Germany}
\email{ruediger@uni-wuppertal.de}

\author[C. Trabelsi]{Chiraz Trabelsi}
\address{Chiraz Trabelsi: Department of Mathematics, University of Tunis El-Manar, 1060 Tunis, Tunisia}
\email{trabelsichiraz@hotmail.fr}

\subjclass[2000]{primary 60H10; secondary 60J60}

\keywords{stochastic differential equations, CIR model with jumps, basic affine jump-diffusion, affine process, Harris recurrence, exponential ergodicity}

\begin{abstract}
In this paper we find the transition densities of the basic affine jump-diffusion (BAJD), which is introduced by Duffie and G\^{a}rleanu [D. Duffie and N. G\^{a}rleanu, \emph{Risk and valuation of
  collateralized debt obligations}, Financial Analysts Journal 57(1) (2001), pp. 41--59] as an extension of the  CIR model with jumps. We  prove the positive Harris recurrence and exponential ergodicity of the BAJD. Furthermore we prove that the unique invariant probability measure $\pi$ of the BAJD is absolutely continuous with respect to the Lebesgue measure and we also derive a closed form formula for the density function of $\pi$.
 \end{abstract}

\maketitle

\section{Introduction}
In this paper we study the basic affine jump-diffusion (shorted as BAJD), which is given as  the unique strong solution $X:=(X_t)_{t \geq 0}$ to the following stochastic differential equation
\begin{equation}\label{jcir}
     dX_{t}=a(\theta-X_{t})dt +\sigma \sqrt{X_{t}}dW_{t}+dJ_t, \quad X_0 \geq 0,
 \end{equation}
where $a,\theta$, $\sigma$ are positive constants, $(W_{t})_{t \ge 0}$ is a $1$-dimensional Brownian motion and $(J_t)_{t \ge 0}$ is an independent $1$-dimensional pure-jump L\'evy process with the L\'evy measure
\[
\nu(dy)=\begin{cases}cde^{-dy}dy, &\ y \ge 0, \\ 0, & \ y<0,\end{cases}
\]
for some constants $c>0$ and $d>0.$ We assume that all the above processes are defined on some filtered probability space $(\Omega, \mathcal{F}, (\mathcal{F})_{t \ge 0}, P)$.

The process $X=(X_t)_{t \geq 0}$ given by (\ref{jcir}) has been introduced by Duffie and G\^{a}rleanu \cite{Duffie01} to describe the dynamics of default intensity.  It was also used in \cite{MR1850789} and \cite{MR2390186} as a short-rate model. Due to its simple structure, it is later referred as the basic affine jump-diffusion. The existence and uniqueness of strong solutions to the SDE (\ref{jcir}) follow from the main results of \cite{MR2584896}. We should remark that the BAJD process $X=(X_t)_{t \geq 0}$ in (\ref{jcir}) stays non-negative, given that $X_0 \ge 0$. This fact can be shown rigorously with the help of comparison theorems for SDEs,  for more details we refer the reader to \cite{MR2584896}. 

As its name implies, the BAJD belongs to the class of affine processes. Roughly speaking, affine processes are Markov processes for which the logarithm of the characteristic function of the process is affine with respect to the initial state.  Affine processes on the canonical state space $\mathbb{R}^m_+\times \mathbb{R}^{n}$ have been thoroughly investigated by Duffie et al \cite{MR1994043}, as well as in \cite{MR2851694}. In particular, it was shown in \cite{MR1994043} (see also \cite{MR2851694}) that any stochastic continuous affine process on $\mathbb{R}^m_+\times \mathbb{R}^{n}$ is a Feller process and a complete characterization of  its generator has been derived. Results on affine processes with the state space $\mathbb{R}_+$ can also be found in \cite{MR1850789}. 

Affine processes have found vast applications in mathematical finance, because of their complexity and computational tractability. As mentioned in \cite{MR1994043}, these applications include the affine term structure models of interest rates, affine stochastic volatility models, and many others.

Recently the long-term behavior of affine processes with the state space $\mathbb{R}_+$ has been studied in \cite{MR2390186} (see also \cite{MR2922631}), motivated by some financial applications in affine term structure models of interest rates. In particular they have found some sufficient conditions such that the affine process converges weakly to a limit distribution.  This limit distribution was later shown in \cite{MR2779872} as the unique invariant probability measure of the process.  Under further sharper assumptions it was even shown in \cite{2013arXiv1301.3243L} that the convergence of the law of the process to its invariance probability measure under the total variation norm is exponentially fast, which is called the exponential ergodicity in the literature. The method used in \cite{2013arXiv1301.3243L} to show the exponential ergodicity is based on some coupling techniques. 

In this paper we investigate the long-time behavior of the BAJD. More precisely, as the first main result of this paper, we show that the BAJD is positive Harris recurrent. As a well-known fact, Harris recurrence implies the existence of (up to the multiplication by a positive constant) unique invariant measure. Therefore our result on the positive Harris recurrence of the BAJD provides another way of proving the existence and uniqueness of invariant measures for the BAJD. Another consequence of the positive Harris recurrence is the limit theorem for additive functionals (see e.g. \cite[Theorem 20.21]{MR1876169}), namely $(1/t) \int_0^t f(X_s)ds$ converges almost surely to $\int_{\mathbb{R}_+}f(x)\pi(dx)$ as $t \to \infty$ for any $f \in \mathcal{B}_b(\mathbb{R}_+)$, where $\pi$ is the unique invariant probability measure for the BAJD. Some applications of Harris recurrence in statistics and calibrations of some financial models can be found in \cite{MR2002527} and \cite{MR3167406}.

As a key step in proving the positive Harris recurrence, we will derive a closed formula for the transition densities of the BAJD, which seems also to be a new result. In particular this formula indicates that the law of the BAJD process at any time $t>0$ is a convolution of a mixture of Gamma-distributions with a noncentral chi-square distribution. We should point out that this fact has already been discovered in \cite{MR1850789} for the BAJD in some special cases, namely for the BAJD whose parameters $(a,\theta, \sigma, c, d)$ satisfy $c/(a-\sigma^2d/2) \in \mathbb{Z}$ . In general, it is not easy to identify the transition density functions of affine processes. However, as shown in \cite{MR3084047}, a general density approximation procedure can be carried out for certain affine processes and in particular for the BAJD.  Different from \cite{MR3084047} we seek in this paper a closed formula of the transition densities of the BAJD. 

Finally we show the exponential ergodicity of the BAJD. We should indicate that the BAJD does not satisfy the assumptions required in \cite{2013arXiv1301.3243L} in order to get the exponential ergodicity. Our method is  also a different one and is based on the existence of a Foster-Lyapunov. 

The remainder of this paper is organized as follows. In Section 2 we collect some key facts on the BAJD. In Section 3 we introduce the so-called Bessel distributions and some mixtures of Bessel-distributions. In Section 4 we derive a closed formula for the transition densities of the BAJD. In Section 5 we first study some continuity properties of the transition densities of the BAJD and then show its positive Harris recurrence. In Section 6 we show the exponential ergodicity of the BAJD.

\section{Preliminaries}

In this section we recall some key facts on the BAJD. As defined in (\ref{jcir}), it is the unique strong solution $X=(X_t)_{t \geq 0}$ to the SDE
\[
     dX_{t}=a(\theta-X_{t})dt +\sigma \sqrt{X_{t}}dW_{t}+dJ_t, \quad X_0 \geq 0.
 \]

Throughout this paper we denote $E_x (\cdot)$ and $P_x (\cdot)$ as the expectation and probability respectively given the initial condition $X_0 = x $, with $x \ge 0$ being a constant.

By the affine structure of  the BAJD process $X$, the characteristic function of $X_t$ (given that $X_0=x$) is of the form
\begin{equation}\label{characfunction}
E_x\big[e^{uX_t}\big]=\exp \big (\phi(t,u)+x\psi(t,u)\big), \quad u \in \mathcal{U}:=\{u \in \mathbb{C}: \Re u \le 0\},
\end{equation}
where the functions $\phi(t,u)$ and $\psi(t,u)$ solve the generalized Riccati equations
\begin{equation}\label{eqaffine}
\begin{cases} \partial_t \phi(t,u)=F\big(\psi(t,u)\big), \quad & \phi(0,u)=0, \\
 \partial_t \psi(t,u)=R\big(\psi(t,u)\big), \quad & \psi(0,u)=u, \\
\end{cases}
\end{equation}
with
\begin{align*}
F(u)=a\theta u+\frac{cu}{d-u}, \quad & u \in \mathbb{C} \setminus \{d\}, \\
R(u)=\frac{\sigma^2u^2}{2}-au, \quad & u \in \mathbb{C}.
\end{align*}
By solving the system (\ref{eqaffine}) we get 
\begin{equation}\label{eqpsi}
\psi(t,u)=\frac{ue^{-at}}{1-\frac{\sigma^2}{2a}u(1-e^{-at})}
\end{equation}
and
\begin{equation}\label{eqphi}
\phi(t,u)=\begin{cases}-\frac{2a\theta}{\sigma^2}\log \big(1-\frac{\sigma^2}{2a}u(1-e^{-at})\big) \\ \qquad \quad  +\frac{c}{a-\frac{\sigma^2d}{2}}\log \Big (\frac{d-\frac{\sigma^2du}{2a}+\big(\frac{\sigma^2d}{2a}-1\big)ue^{-at}}{d-u}\Big ),  \hspace*{0.73cm}
\mbox{if} \ \Delta\neq 0,
 \\ -\frac{2a\theta}{\sigma^2} \log \big(1-\frac{\sigma^2}{2a}u(1-e^{-at})\big)+\frac{cu(1-e^{-at})}{a(d-u)}, \quad \ \ \  \mbox{if} \ \Delta=0,
\end{cases}
\end{equation}
where $\Delta=a - \sigma^2d/2$.

According to (\ref{characfunction}),  (\ref{eqpsi}) and (\ref{eqphi}),  the characteristic function of $X_t$ is given by
\begin{equation}\label{eqchara}
E_x[e^{uX_t}]=\begin{cases}\big(1-\frac{\sigma^2}{2a}u(1-e^{-at})\big)^{-\frac{2a\theta}{\sigma^2}}  \cdot \Big (\frac{d-\frac{\sigma^2du}{2a}+\big(\frac{\sigma^2d}{2a}-1\big)ue^{-at}}{d-u}\Big )^{\frac{c}{a-\frac{\sigma^2d}{2}}} \\ \qquad \ \ \cdot \exp \Big (\frac{xue^{-at}}{1-\frac{\sigma^2}{2a}u(1-e^{-at})} \Big),   \qquad  \mbox{if} \ \Delta\neq 0, \\ %
 \big(1-\frac{\sigma^2}{2a}u(1-e^{-at})\big)^{-\frac{2a\theta}{\sigma^2}} \cdot \exp \Big (\frac{cu(1-e^{-at})}{a(d-u)}+\frac{xue^{-at}}{1-\frac{\sigma^2}{2a}u(1-e^{-at})} \Big),\\  \hspace*{5.45cm} \mbox{if} \ \Delta = 0 .
\end{cases}
\end{equation}
Obviously $E_x\big[\exp(uX_t)\big]$ is continuous in $t \ge 0$ and thus the BAJD process $X$ is stochastically continuous.

\smallskip
We should point out that if we allow the parameter $c$ to be $0$, then the stochastic differential equation (\ref{jcir}) turns into 
\begin{equation}\label{cir}
     dZ_{t}=a(\theta-Z_{t})dt +\sigma \sqrt{Z_{t}}dW_{t}, \quad Z_0=x\geq 0.
 \end{equation}
To avoid confusions we have used $Z_t$ instead of $X_t$ here. The unique solution $Z:=(Z_t)_{t \ge 0}$ to (\ref{cir}) is the well-known Cox-Ingersoll-Ross (shorted as CIR) process and it holds 
\begin{equation}\label{eqcharaCIR}
E_x[e^{uZ_t}]
= \big(1-\frac{\sigma^2}{2a}u(1-e^{-at})\big)^{-\frac{2a\theta}{\sigma^2}} \cdot   \exp \Big (\frac{xue^{-at}}{1-\frac{\sigma^2}{2a}u(1-e^{-at})} \Big)  .
 \end{equation} 
In Section 4 we will find a distribution $\nu_t$ on $\mathbb{R}_+$ such that 
  \begin{equation}\label{eqcharaJ}
  \int_{\mathbb{R}+} e^{uy} \nu_t(dy) =\begin{cases} \Big (\frac{d-\frac{\sigma^2du}{2a}+\big(\frac{\sigma^2d}{2a}-1\big)ue^{-at}}{d-u}\Big)^{\frac{c}{a-\frac{\sigma^2d}{2}}}, \qquad   \mbox{if} \ \Delta \neq 0, \\
    \exp \Big (\frac{cu(1-e^{-at})}{a(d-u)} \Big), \hspace*{2.81cm}   \mbox{if} \ \Delta=0 .
    \end{cases}
    \end{equation}
Then it follows from (\ref{eqchara}),  (\ref{eqcharaCIR}) and (\ref{eqcharaJ}) that the distribution of the BAJD is the convolution of the distribution of the CIR process and  $\nu_t$. In light of this observation we can thus identify the transition probabilities $p(t,x,y)$ of the BAJD with  
\begin{equation}\label{eqconv}
p(t,x,y)=\int_{\mathbb{R}_+}f(t,x,y-z)\nu_t(dz),  \qquad x,y\ge0, \ t>0,
\end{equation}
where $f(t,x,y)$ denotes the transition densities of the CIR process.

\begin{remark} For a different way of representing the distribution of $X_t$ as a convolution we refer the reader to \cite{MR1850789}. In fact it was indicated in \cite[Remark 4.8]{MR1850789} that the distribution of any affine process on $\mathbb{R}_+$ can be represented as the convolution of two distributions on $\mathbb{R}_+$.
\end{remark}


\section{Mixtures of Bessel distributions}
To  find a distribution $\nu_t$ with the characteristic function of the form (\ref{eqcharaJ}) and study the distributional properties of  the BAJD, it is inevitable to encounter the Bessel distributions and mixtures of Bessel distributions. 

We start with a slight variant of the Bessel distribution defined in \cite[p.15]{MR2448332}. Suppose that $\alpha$ and $\beta$ are positive constants. A probability measure $\mu_{\alpha,\beta}$ on $\big (\mathbb{R}_+,\mathcal{B}(\mathbb{R}_+)\big)$ is called a Bessel distribution  with parameters $\alpha$ and $\beta$ if
\begin{equation}\label{besseldistru}
\mu_{\alpha,\beta}(dx)=e^{-\alpha}\delta_{0}(dx)+\beta e^{-\alpha-\beta x}\sqrt{\frac{\alpha}{\beta x}} \cdot I_1 (2\sqrt{\alpha \beta x})dx,
\end{equation}
where $\delta_0$ is the Dirac measure at the origin and $I_1$ is the modified Bessel function of the first kind, namely
\[
 I_1(r)=\frac{r}{2}\sum_{k=0}^{\infty}\frac{\big(\frac{1}{4}r^2\big)^k}{k ! (k+1)!}, \qquad r \in \mathbb{R}.
\]

Now we consider mixtures of Bessel distributions. Let $\gamma >0$ be a constant and define a probability measure $m_{\alpha,\beta,\gamma}$ on $\mathbb{R}_+$ as follows:
\[
m_{\alpha,\beta,\gamma}(dx):=\int_0^{\infty}\mu_{\alpha t, \beta}(dx)\frac{t^{\gamma-1}}{\Gamma(\gamma)}e^{-t}dt.
\]

Similar to \cite{MR2448332} we can easily calculate the characteristic function of $\mu_{\alpha,\beta}$ and $m_{\alpha,\beta,\gamma}$. 
\begin{lemma} 
For $u\in \mathcal{U}$ we have:
\begin{flalign*}
&(\mbox{i})  \quad \int_0^{\infty}e^{ux}\mu_{\alpha, \beta}(dx)= e^{\frac{\alpha u}{\beta-u}}. &\\
&(\mbox{ii}) \quad \int_0^{\infty}e^{ux}m_{\alpha, \beta,\gamma}(dx) =\Big (\frac{1}{\alpha+1}+\frac{\alpha}{\alpha +1}  \cdot \frac{1}{1-\frac{\alpha+1}{\beta}\cdot u} \Big)^{\gamma}.&
\end{flalign*}
\end{lemma}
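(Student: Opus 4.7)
The plan is to prove (i) by expanding the Bessel function as a power series and integrating term by term, and then to reduce (ii) to (i) via Fubini plus a standard Gamma integral.

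For (i), I would first rewrite
\[
\sqrt{\tfrac{\alpha}{\beta x}}\,I_1(2\sqrt{\alpha\beta x})=\alpha\sum_{k=0}^{\infty}\frac{(\alpha\beta x)^{k}}{k!(k+1)!},
\]
which follows directly from the series definition of $I_1$. The integral $\int_0^\infty e^{ux}\mu_{\alpha,\beta}(dx)$ splits into the atom contribution $e^{-\alpha}$ and a continuous part. For the continuous part, Tonelli legitimises swapping sum and integral because $|e^{(u-\beta)x}|\le e^{-\beta x}$ for $\Re u\le 0$ and each $x^k e^{-\beta x}$ is integrable. Each term becomes $\int_0^\infty x^k e^{(u-\beta)x}dx=k!/(\beta-u)^{k+1}$, after which I would use the identity $\sum_{k\ge 0}w^k/(k+1)!=(e^w-1)/w$ with $w=\alpha\beta/(\beta-u)$ to collapse the sum. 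Collecting the constants produces a continuous contribution of $e^{-\alpha}\bigl(e^{\alpha\beta/(\beta-u)}-1\bigr)$; adding the atom and simplifying $-\alpha+\alpha\beta/(\beta-u)=\alpha u/(\beta-u)$ yields (i).

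For (ii), I would apply Fubini to
\[
\int_0^\infty e^{ux}m_{\alpha,\beta,\gamma}(dx)=\int_0^\infty\Bigl(\int_0^\infty e^{ux}\mu_{\alpha t,\beta}(dx)\Bigr)\frac{t^{\gamma-1}}{\Gamma(\gamma)}e^{-t}\,dt,
\]
which is legal since $|e^{ux}|\le 1$ and each $\mu_{\alpha t,\beta}$ is a probability measure. Substituting (i) turns the right-hand side into the Laplace transform of the $\mathrm{Gamma}(\gamma,1)$ density at $s=1-\alpha u/(\beta-u)=(\beta-(\alpha+1)u)/(\beta-u)$, evaluating to $s^{-\gamma}$ on the principal branch. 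A short algebraic rewrite
\[
\frac{1}{\alpha+1}+\frac{\alpha}{\alpha+1}\cdot\frac{1}{1-(\alpha+1)u/\beta}=\frac{\beta-u}{\beta-(\alpha+1)u}
\]
matches this with the stated expression.

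There is no substantial obstacle beyond bookkeeping. The one thing worth checking is that $\Re s>0$ for all $u\in\mathcal{U}$, so that the Gamma integral converges and the principal-branch power $s^{-\gamma}$ is unambiguous; writing $u=-a+bi$ with $a\ge 0$ and rationalizing gives $\Re s=\bigl[(\beta+(1+\alpha)a)(\beta+a)+(1+\alpha)b^2\bigr]/|\beta-u|^2>0$, which settles this point.
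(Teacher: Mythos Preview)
Your proposal is correct and follows essentially the same route as the paper's proof: for (i) both you and the paper expand $I_1$ as a power series, interchange sum and integral, evaluate the resulting Gamma integrals, and reassemble the exponential; for (ii) both apply Fubini, insert (i), and compute the Gamma$(\gamma,1)$ Laplace transform before the same algebraic rearrangement. The only difference is that you supply the justifications (Tonelli, Fubini, and the verification $\Re s>0$) that the paper leaves implicit.
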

\begin{proof}
(i) If $u\in \mathcal{U}$, then 
\begin{align*}
\int_0^{\infty}e^{ux}\mu_{\alpha, \beta}(dx) =&e^{-\alpha}+e^{-\alpha}\int_0^{\infty}\beta e^{-\beta x}\cdot e^{u x} \sqrt{\frac{\alpha}{\beta x}} \big(\sqrt{\alpha \beta x}\big)\cdot \sum_{k=0}^{\infty}\frac{(\alpha \beta x)^k}{k!(k+1)!}dx \\
=&e^{-\alpha}+e^{-\alpha}\int_0^{\infty}\alpha \beta e^{(u-\beta) x}\cdot  \sum_{k=0}^{\infty}\frac{(\alpha \beta x)^k}{k!(k+1)!} dx \\
=&e^{-\alpha}+\alpha \beta e^{-\alpha} \sum_{k=0}^{\infty} \int_0^{\infty} e^{(u-\beta) x} \frac{(\alpha \beta x)^k}{k!(k+1)!} dx \\
=&e^{-\alpha}+e^{-\alpha} \sum_{k=0}^{\infty} \Big (\frac{\alpha \beta}{\beta-u} \Big )^{k+1} \cdot \frac{1}{(k+1)!}  \\
=&e^{-\alpha} \sum_{k=0}^{\infty} \Big (\frac{\alpha \beta}{\beta-u} \Big )^{k} \cdot \frac{1}{k!}= e^{  \frac{\alpha u}{\beta-u}}.
\end{align*}
(ii) For $u\in \mathcal{U}$ we get
\begin{eqnarray}\label{eqfbessel}
 \int_0^{\infty}e^{ux}m_{\alpha, \beta,\gamma}(dx)  &=&  \int_{0}^{\infty}e^{\alpha t \cdot \frac{u}{\beta-u}}\cdot \frac{t^{\gamma-1}}{\Gamma(\gamma)}e^{-t}dt = \Big (1+\alpha \cdot \frac{u}{u-\beta} \Big)^{-\gamma} \nonumber \\
  &=& \Big ( \frac{-\beta +(\alpha+1)u}{-\beta+u} \Big)^{-\gamma}=\Big ( \frac{-\beta+u}{-\beta +(\alpha+1)u} \Big)^{\gamma} \nonumber \\
  &=&\Big ( \frac{\frac{1}{\alpha+1}((\alpha+1)u-\beta)+\frac{\beta}{\alpha+1}-\beta}{-\beta+(\alpha+1)u} \Big)^{-\gamma} \nonumber \\
   &=&  \Big (\frac{1}{\alpha+1}+\frac{\alpha}{\alpha +1}  \cdot \frac{1}{1-\frac{\alpha+1}{\beta}\cdot u} \Big)^{\gamma}.
\end{eqnarray}
\end{proof}

\begin{lemma} \label{lmixgamma} (i) The measure $m_{\alpha, \beta,\gamma}$ can be represented as follows:
\begin{equation}\label{mixgamma}
m_{\alpha,\beta,\gamma}(dx)=\Big (\frac{1}{1+\alpha} \Big)^{\gamma}  \delta _{0}+ g_{\alpha,\beta,\gamma}(x)dx, \quad x \ge0,
\end{equation}
 where
 \begin{equation}\label{defineg}
 g_{\alpha,\beta,\gamma}(x):= \sum_{k=1}^{\infty} \frac{\alpha^{k}\Gamma(k+\gamma)}{(\alpha+1)^{k+\gamma}\Gamma (\gamma)k!}\Gamma(x;k,\beta), \quad x \ge 0, 
\end{equation}
and $ \Gamma(x;k,\beta)$ denotes the density function of the Gamma distribution with parameters $k$ and $\beta$. \\
(ii) The function $g_{\alpha,\beta,\gamma}(x)$ defined in (\ref{defineg}) is a continuous function with variables $(\alpha,\beta,\gamma,x)\in D:=(0,\infty)\times(0,\infty)\times(0,\infty)\times[0,\infty)$.
\end{lemma}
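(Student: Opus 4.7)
The plan for part (i) is a direct computation from the definition of $m_{\alpha,\beta,\gamma}$. Substituting the Bessel distribution \eqref{besseldistru} for $\mu_{\alpha t,\beta}$, the measure splits into an atomic part at $0$ and an absolutely continuous part. The atom contributes
\[
\left(\int_{0}^{\infty}e^{-\alpha t}\frac{t^{\gamma-1}e^{-t}}{\Gamma(\gamma)}\,dt\right)\delta_{0}=\frac{1}{(1+\alpha)^{\gamma}}\delta_{0}
\]
by the standard Gamma integral, which already matches the first term in \eqref{mixgamma}. For the density I would expand the modified Bessel function as
\[
\sqrt{\tfrac{\alpha t}{\beta x}}\,I_{1}\!\bigl(2\sqrt{\alpha t\beta x}\bigr)=\alpha t\sum_{k=0}^{\infty}\frac{(\alpha\beta tx)^{k}}{k!(k+1)!},
\]
multiply through by $\beta e^{-\alpha t-\beta x}t^{\gamma-1}e^{-t}/\Gamma(\gamma)$, and apply Fubini (every term is non-negative) to interchange the sum with the integral in $t$. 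Each resulting $t$-integral is $\Gamma(k+\gamma+1)/(1+\alpha)^{k+\gamma+1}$. A relabeling $j=k+1$, combined with the identification $\beta^{j}x^{j-1}e^{-\beta x}/(j-1)!=\Gamma(x;j,\beta)$, reproduces exactly the series \eqref{defineg}.

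For part (ii) the plan is the Weierstrass M-test on compact subsets of $D$. Fix a compact $K\subset D$, giving bounds $0<\alpha_{1}\le\alpha\le\alpha_{2}$, $0<\beta\le\beta_{2}$, $0<\gamma_{1}\le\gamma\le\gamma_{2}$, $0\le x\le x_{2}$. Using the monotonicity estimates $\alpha/(\alpha+1)\le\alpha_{2}/(1+\alpha_{2})=:\rho<1$, $(\alpha+1)^{-\gamma}\le1$, $\Gamma(k+\gamma)/\Gamma(\gamma)\le\Gamma(k+\gamma_{2})/\Gamma(\gamma_{2})$, and $\Gamma(x;k,\beta)\le\beta_{2}(\beta_{2}x_{2})^{k-1}/(k-1)!$, the $k$-th summand of \eqref{defineg} is dominated by
\[
\rho^{k}\cdot\frac{\Gamma(k+\gamma_{2})}{\Gamma(\gamma_{2})\,k!}\cdot\frac{\beta_{2}(\beta_{2}x_{2})^{k-1}}{(k-1)!},
\]
independently of the point of $K$. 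Stirling's formula gives $\Gamma(k+\gamma_{2})/(\Gamma(\gamma_{2})k!)=O(k^{\gamma_{2}-1})$, while $\rho^{k}$ decays geometrically and $(\beta_{2}x_{2})^{k-1}/(k-1)!$ decays super-exponentially, so the majorant is summable. Each summand is jointly continuous in $(\alpha,\beta,\gamma,x)$ on $D$ (note that $x^{k-1}$ remains continuous at $x=0$ for every $k\ge1$), and uniform convergence on compacta transfers continuity to the sum $g_{\alpha,\beta,\gamma}(x)$.

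The main technical point lies in part (ii): the dominating series must be constructed uniformly in all four parameters simultaneously. The observation that tips the balance is that the factorial $(k-1)!$ hidden inside the Gamma density $\Gamma(x;k,\beta)$ supplies super-exponential decay, which easily swallows both the polynomial growth of $\Gamma(k+\gamma)/k!$ and the geometric factor $\rho^{k}$, as long as $\alpha$, $\beta$, $\gamma$, and $x$ are confined to a compact subset of $D$. Part (i) is then a careful but routine manipulation of the Bessel series, the Gamma integral, and a reindexing step.
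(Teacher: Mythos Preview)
Your argument for part~(i) is essentially identical to the paper's: both substitute \eqref{besseldistru}, split off the atom at $0$ via the Gamma integral, expand $I_1$ as a power series, interchange sum and $t$-integral by Tonelli, evaluate each $t$-integral as $\Gamma(k+\gamma+1)/(1+\alpha)^{k+\gamma+1}$, and reindex.

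For part~(ii) your approach is correct but differs from the paper's. You work directly with the series \eqref{defineg} and apply the Weierstrass $M$-test on compacta, using the rising-factorial identity $\Gamma(k+\gamma)/\Gamma(\gamma)=\gamma(\gamma+1)\cdots(\gamma+k-1)$ to bound the Pochhammer ratio monotonically in $\gamma$ and the crude estimate $\Gamma(x;k,\beta)\le\beta_2^{k}x_2^{k-1}/(k-1)!$ to extract the super-exponential decay. The paper instead returns to the \emph{integral} representation
\[
g_{\alpha,\beta,\gamma}(x)=\int_0^\infty \frac{\alpha\beta t^\gamma}{\Gamma(\gamma)}e^{-(\alpha+1)t-\beta x}\Big(\sum_{k\ge0}\frac{(\alpha t\beta x)^k}{k!(k+1)!}\Big)\,dt,
\]
bounds the inner series by $e^{\alpha t}e^{\beta x}$ via $(k!)^2\le k!(k+1)!$, obtains the $t$-integrable dominant $c_\delta\big(t^{\gamma_0-\delta}\mathbf{1}_{[0,1]}+t^{\gamma_0+\delta}e^{-t}\mathbf{1}_{(1,\infty)}\big)$ uniformly on a small box, and applies dominated convergence. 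Your route stays closer to the object just computed in part~(i) and makes the role of the factorial in $\Gamma(x;k,\beta)$ explicit; the paper's route trades the term-by-term bookkeeping for a single clean integrand bound. Both are elementary and of comparable length.
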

\begin{proof}
(i) We can write
\begin{eqnarray*}
  m_{\alpha,\beta,\gamma}(dx) &=& \int_0^{\infty}\mu_{\alpha t, \beta}(dx)\frac{t^{\gamma-1}}{\Gamma(\gamma)}e^{-t}dt \\
  &=& \int_0^{\infty}\Big(e^{-\alpha t}\delta_{0}(dx)+\beta e^{-\alpha t-\beta x}\sqrt{\frac{\alpha t}{\beta x}} \cdot I_1 (2\sqrt{\alpha t\beta x})dx\Big)\frac{t^{\gamma-1}}{\Gamma(\gamma)}e^{-t}dt \\
  &=& \Big (\frac{1}{1+\alpha} \Big)^{\gamma}  \delta _{0}(dx)+\int_0^{\infty} \alpha \beta e^{-\alpha t-\beta x} \sum_{k=0}^{\infty} \frac{(\alpha t\beta x)^k}{k!(k+1)!}\cdot\frac{t^{\gamma}}{\Gamma(\gamma)}e^{-t}dtdx\\
  &=& \Big (\frac{1}{1+\alpha} \Big)^{\gamma}  \delta _{0}(dx)+ \alpha\beta e^{-\beta x}.\sum_{k=0}^{\infty}\Big(\int_0^{\infty}(\alpha\beta x)^k\frac{e^{-(\alpha+1)t}t^{\gamma+k}}{\Gamma(\gamma)k!(k+1)!}dt\Big)dx  \\
  &=&  \Big (\frac{1}{1+\alpha} \Big)^{\gamma}  \delta _{0}(dx)+  \sum_{k=0}^{\infty} \frac{\alpha^{k+1}\Gamma(k+\gamma+1)}{(\alpha+1)^{k+\gamma+1}\Gamma (\gamma)(k+1)!}\Gamma(x;k+1,\beta)dx \\
  &=&  \Big (\frac{1}{1+\alpha} \Big)^{\gamma}  \delta _{0}(dx)+ \sum_{k=1}^{\infty} \frac{\alpha^{k}\Gamma(k+\gamma)}{(\alpha+1)^{k+\gamma}\Gamma (\gamma)k!}\Gamma(x;k,\beta)dx.
\end{eqnarray*}

(ii) By the definition of $g_{\alpha,\beta,\gamma}(x)$ we have
\begin{eqnarray*}
  g_{\alpha,\beta,\gamma}(x) &=& \int_{0}^{\infty}\beta e^{-\alpha t-\beta x}\sqrt{\frac{\alpha t}{\beta x}} \cdot I_1 (2\sqrt{\alpha t \beta x})\frac{t^{\gamma-1}}{\Gamma(\gamma)}e^{-t}dt\\
   &=& \int_{0}^{\infty}\alpha \beta e^{-\alpha t-\beta x}   \big( \sum_{k=0}^{\infty}\frac{(\alpha t\beta x)^k}{k!(k+1)!} \big)\frac{t^{\gamma}}{\Gamma(\gamma)}e^{-t}dt\\
   &=& \int_{0}^{\infty} \frac{\alpha\beta t^\gamma}{\Gamma(\gamma)} e^{-(\alpha+1)t-\beta x}\big( \sum_{k=0}^{\infty}\frac{(\alpha t\beta x)^k}{k!(k+1)!} \big) dt
\end{eqnarray*}
Suppose that $(\alpha_0,\beta_0,\gamma_0,x_0)\in D$ and $\delta > 0$ is small enough such that $\gamma_0-\delta > 0$, $\alpha_0-\delta> 0$ and $\beta_0-\delta>0$. Then for $(\alpha,\beta,\gamma,x)\in K_\delta$ with \[
K_\delta:=\{(\alpha,\beta,\gamma,x)\in D: \max\{ |\alpha-\alpha_0|, |\beta-\beta_0|, |\gamma-\gamma_0|, |x-x_0|\}\le \delta\}\]
we get
\begin{align}\label{continuity}
 \frac{\alpha\beta t^\gamma}{\Gamma(\gamma)} e^{-(\alpha+1)t-\beta x}\big( \sum_{k=0}^{\infty}\frac{(\alpha t\beta x)^k}{k!(k+1)!} \big)  
  \leq& \frac{\alpha\beta t^\gamma}{\Gamma(\gamma)} e^{-(\alpha+1)t-\beta x}\big( \sum_{k=0}^{\infty}\frac{(\alpha t)^k(\beta x)^k}{(k!)^2} \big) \nonumber \\
   \leq & \frac{\alpha\beta t^\gamma}{\Gamma(\gamma)} e^{-(\alpha+1)t-\beta x}. e^{\alpha t}e^{\beta x}  \leq \frac{\alpha\beta t^\gamma}{\Gamma(\gamma)} e^{-t} \nonumber \\
   \leq &c_\delta\big(t^{\gamma_0-\delta}1_{[0,1]}(t)+t^{\gamma_0+\delta}e^{-t}1_{(1,\infty)}(t)\big)
\end{align}
for some constant $c_\delta >0$, since $\frac{\alpha\beta}{\Gamma(\gamma)}$ is continuous and thus bounded for \\ $(\alpha,\beta,\gamma,x)\in K_\delta$.
If $(\alpha_n,\beta_n,\gamma_n,x_n)\rightarrow (\alpha_0,\beta_0,\gamma_0,x_0)$  as $n\rightarrow\infty$, then by dominated convergence we get
\begin{equation*}
    \lim_{n\rightarrow\infty} g_{\alpha_n,\beta_n,\gamma_n}(x_n)=g_{\alpha_0,\beta_0,\gamma_0}(x_0),
\end{equation*}
namely $g_{\alpha,\beta,\gamma}(x)$ is a continuous function on $D.$
\end{proof}

\begin{remark}
If we write $\delta_0=\Gamma(0,\beta)$, namely considering the Dirac measure $\delta_0$ as a degenerated Gamma distribution, then the representation in (\ref{mixgamma}) shows that the measure $m_{\alpha,\beta,\gamma}$ is a mixture of Gamma distributions $\Gamma(k, \beta), \ k \in \mathbb{Z}_+$, namely 
\[
  m_{\alpha,\beta,\gamma}=\Big (\frac{1}{1+\alpha} \Big)^{\gamma}  \Gamma(0, \beta)+\sum_{k=1}^{\infty}  \frac{\alpha^{k}\Gamma(k+\gamma)}{(\alpha+1)^{k+\gamma}\Gamma (\gamma)k!} \Gamma(k, \beta).
 \]
\end{remark}
\section{Transition density of the BAJD}
In this section we shall derive a closed form expression for the transition density of the BAJD. We should mention that in \cite[Chapter 7]{MR1850789} the density functions of  the pricing semigroup associated to the BAJD was derived for some special cases. Essentially, the method used in \cite{MR1850789} could be used to derive the density functions of the BAJD in the case where $c/(a-\sigma^2d/2) \in \mathbb{Z}$. Here we proceed like \cite{MR1850789} but deal with more general parameters. In order to do this,  we first find, by using the results of the previous section, a probability measure $\nu_t$ on $\mathbb{R}_+$ whose characteristic function satisfies (\ref{eqcharaJ}).

 We recall that the BAJD process $X=(X_t)_{t \ge 0}$ is given by (\ref{jcir}). We distinguish between three cases according to the sign of $\Delta:=a-\sigma^2d/2$.

\subsection{Case \ i): $\Delta>0$}
From (\ref{eqchara}) we know that
\begin{align}
E_x[e^{uX_t}]=\big(1-\frac{\sigma^2}{2a}u(1-e^{-at})&\big)^{-\frac{2a\theta}{\sigma^2}} \cdot   \exp \Big (\frac{xue^{-at}}{1-\frac{\sigma^2}{2a}u(1-e^{-at})} \Big)  \nonumber \\
\cdot & \Big (\frac{d-\frac{\sigma^2du}{2a}+\big(\frac{\sigma^2d}{2a}-1\big)ue^{-at}}{d-u}\Big )^{\frac{c}{a-\frac{\sigma^2d}{2}}}   \label{ichara}
\end{align}
The product of the first two terms on the right-hand side of (\ref{ichara}) coincides with the characteristic function of the CIR process $Z=(Z_t)_{t \ge 0}$ defined in (\ref{cir}).  It is well-known that the transition density function of the CIR process is given by
\begin{equation}\label{cirdensity}
f(t,x,y) =\rho e^{-u-v}\Big ( \frac{v}{u} \Big)^{\frac{q}{2}} I_{q}\big (2(uv)^{\frac{1}{2}} \big)
\end{equation}
for $t>0, x>0$ and $y\ge0$, where
\begin{align*}
\rho \equiv & \frac{2a}{\sigma^{2}\Big (1-e^{-at}\Big)}, &u \equiv \rho xe^{-at},\\
v \equiv & \rho y, &q \equiv  \frac{2a\theta}{\sigma^{2}}-1,
\end{align*}
and $I_{q}(\cdot)$ is the modified Bessel function of the first kind of order $q$, namely 
\[
 I_q(r)=\big(\frac{r}{2}\big)^{q}\sum_{k=0}^{\infty}\frac{\big(\frac{1}{4}r^2\big)^k}{k ! \Gamma(q+k+1)}, \qquad r >0.
\]
We should remark that for $x=0$ the formula of the density function $f(t,x,y)$ given in (\ref{cirdensity}) is not valid any more. In this case we have 
\begin{equation}\label{cirdensity0}
f(t,0,y)= \frac{\rho}{\Gamma (q+1)}v^qe^{-v}
\end{equation}
for $t>0$ and $y \ge 0$.

Thus
\[
\int_{\mathbb{R}_+}f(t,x,y)e^{uy}dy= \big(1-\frac{\sigma^2}{2a}u(1-e^{-at})\big)^{-\frac{2a\theta}{\sigma^2}} \cdot   \exp \Big (\frac{xue^{-at}}{1-\frac{\sigma^2}{2a}u(1-e^{-at})} \Big)  .
\]
Now we want to find a probability measure $\nu_t$ with 
\begin{align} \int_{\mathbb{R}_+}e^{uy}\nu_t(dy)= &\Big (\frac{d-\frac{\sigma^2du}{2a}+\big(\frac{\sigma^2d}{2a}-1\big)ue^{-at}}{d-u}\Big )^{\frac{c}{a-\frac{\sigma^2d}{2}}} \nonumber \\
= &\Big (\frac{d-uL_1(t)}{d-u}\Big )^{\frac{c}{a-\frac{\sigma^2d}{2}}} \nonumber \\
=& \Big( L_1(t)+\big(1-L_1(t)\big) \frac{1}{1-\frac{u}{d}} \Big)^{\frac{c}{a-\frac{\sigma^2d}{2}}},\label{laplacenu}
\end{align}
where $L_1(t):=\exp(-at)+\sigma^2d\big( 1-\exp(-at) \big) /(2a)$. If such a measure $\nu_t$ exists, then the law of $X_t$ can be written as the convolution of the law of $Z_t$ and  $\nu_t$.

Comparing the characteristic functions (\ref{eqfbessel}) and (\ref{laplacenu}), it is easy to see that  we can seek the measure $\nu_t$ as a mixture of Bessel distributions. More precisely,  we define 
\begin{equation} \label{formulam}
\nu_t:=m_{\alpha_1(t),\beta_1(t),\gamma_1}
\end{equation}
with 
\begin{equation}\label{deltapositiv}
\begin{cases} \alpha_1(t): =\frac{1}{L_1(t)}-1 \\ \beta_1(t):  =\frac{d}{L_1(t)} \\
\gamma_1:=\frac{c}{a-\frac{\sigma^2d}{2}}.
\end{cases}
\end{equation}
Then the characteristic function of $\nu_t$ coincides with (\ref{laplacenu}). 
Since the probability measure $m_{\alpha_1(t),\beta_1(t),\gamma_1}$ is of  the form (\ref{mixgamma}), it follows now from  (\ref{eqcharaCIR}), (\ref{ichara}) and (\ref{laplacenu}) that the law of $X_t$ is absolutely continuous with respect to the Lesbegue measure and its density function $p(t,x,y)$ is given by 
\begin{equation}\label{jcirdensity}
p(t,x,y)=\Big (\frac{1}{1+\alpha_1(t)} \Big)^{\gamma_1} f(t,x,y)+ \int_0^y f(t,x,y-z)  g_{\alpha_1(t),\beta_1(t),\gamma_1}(z)dz
\end{equation}
for $t>0, \ x \ge 0$ and $y \ge0$,
where the function $g$ is defined in (\ref{defineg}). 
\subsection{Case \ ii): $\Delta<0$}
Similar to the case $(i)$, it suffices to find a probability measure $\nu_{t}$ with
\begin{eqnarray}
  \int_{\mathbb{R}_{+}} e^{uy}\nu_{t}(dy) &=& \Big (\frac{d-\frac{\sigma^2du}{2a}+\big(\frac{\sigma^2d}{2a}-1\big)ue^{-at}}{d-u}\Big )^{\frac{c}{a-\frac{\sigma^2d}{2}}}\nonumber \\
   &=& \Big (\frac{d-u}{d-\frac{\sigma^2du}{2a}+\big(\frac{\sigma^2d}{2a}-1\big)ue^{-at}}\Big )^{\frac{-c}{a-\frac{\sigma^2d}{2}}} \nonumber \\
   &=&  \Big (\frac{d-u}{d-L_1(t)u}\Big )^{\frac{-c}{a-\frac{\sigma^2d}{2}}} \nonumber \\
   &=&   \bigg (\frac{1}{L_1(t)}+(1-\frac{1}{L_1(t)}) \cdot \frac{1}{1-\frac{L_1(t)u}{d}}\bigg )^{\frac{-c}{a-\frac{\sigma^2d}{2}}}.\label{laplacenu2}
\end{eqnarray}
Since $\Delta=a-\sigma^2d/2<0$, therefore $\sigma^2d/2a>1$ and 
\[
L_1(t)=e^{-at}+\frac{\sigma^2d}{2a}\cdot \big( 1-e^{-at} \big)>1.\]
According to the formula ($\ref{besseldistru}$), we can choose
\begin{center} \label{formulam2}
    $\nu_t=m_{\alpha_2(t),\beta_2(t),\gamma_2}$
\end{center}
with the parameters $\alpha_2,\beta_2$ and $\gamma_2$ defined by
\begin{equation}\label{parameternegativ}
\begin{cases} \alpha_2(t):= L_1(t)-1 \\ \beta_2:= d \\
\gamma_2:=\frac{-c}{a-\frac{\sigma^2d}{2}}.
\end{cases}
\end{equation}
Similar to the case $(i)$, the transition densities $p(t,x,y)$ of $X$ is given by
\begin{equation}\label{densitydeltaneg}
p(t,x,y)=\Big (\frac{1}{1+\alpha_2(t)} \Big)^{\gamma_2} f(t,x,y)+ \int_0^y f(t,x,y-z)  g_{\alpha_2(t),\beta_2,\gamma_2}(z)dz
\end{equation}
for $t>0, \ x \ge 0$ and $y \ge0$, where the function $g$ is defined in (\ref{defineg}).

\subsection{Case \ iii): $\Delta=0$}
In this case we need to find a probability measure $\nu_t$ with
\begin{equation*}
    \int_{\mathbb{R}_{+}}e^{uy}\nu_{t}(dy)=\exp\Big(\frac{cu(1-e^{-at})}{a(d-u)}\Big).
\end{equation*}
According to the formula (\ref{mixgamma}) we can take $\nu_t$ as a Bessel distribution $\mu_{\alpha_3(t),\beta_3}$ with the parameters $\alpha_3(t)$ and $\beta_3$ defined by
\begin{equation}\label{paramDelta0}
  \begin{cases} \alpha_3(t) :=\frac{c}{a}(1-e^{-at}) \\ \beta_3:=d.
\end{cases}
\end{equation}
Thus in this case the transition densities $p(t,x,y)$ of $X$ is given by
\begin{align}
    p(t,x,y)=\int_{0}^{y}f(t,x,&y-z) \beta_3 e^{-\alpha_3(t)-\beta_3 z}\sqrt{\frac{\alpha_3(t)}{\beta_3 z}}I_1(2\sqrt{\alpha_3(t)\beta_3 z})dz \nonumber  \\
   +& e^{-\alpha_3(t)}f(t,x,y)\label{desitydelta0}
\end{align}
for $t>0, \ x \ge 0$ and $y \ge0$.

Summarizing the above three cases we get the following theorem. 

\begin{theorem}Let $X=(X_t)_{t \ge 0}$ be the BAJD defined in (\ref{jcir}). Then the law of $X_t$ given that $X_0=x \ge 0$ is absolutely continuous with respect to the Lesbegue measure and thus posseses a density function $p(t,x,y)$, namely 
\[
P_x(X_t \in A) = \int_{A}p(t,x,y)dy, \quad  t \ge 0, \ A \in \mathcal{B}(\mathbb{R}_+)
\]
According to the sign of $\Delta=a-\sigma^2d/2$, the density $p(t,x,y)$ is given by (\ref{jcirdensity}), (\ref{densitydeltaneg}) and (\ref{desitydelta0}) respectively. 
\end{theorem}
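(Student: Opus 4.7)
The plan is to combine the three case analyses carried out in Subsections 4.1--4.3 into one unified statement, by showing in each case that the law of $X_t$ is the convolution of the CIR transition law with an explicitly identified mixture of Bessel (or Bessel) distribution, and then reading off the density via (\ref{eqconv}).

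First I would record the key factorization: from (\ref{eqchara}) together with (\ref{eqcharaCIR}), the characteristic function $E_x[e^{uX_t}]$ splits as the product of $E_x[e^{uZ_t}]$ and the factor appearing in (\ref{eqcharaJ}). Since the law of $Z_t$ is absolutely continuous with density $f(t,x,y)$ given by (\ref{cirdensity}) and (\ref{cirdensity0}), and since a measure on $\mathbb{R}_+$ is determined by its Laplace transform on $\mathcal{U}$, it suffices to exhibit, in each of the three cases $\Delta>0$, $\Delta<0$, $\Delta=0$, a probability measure $\nu_t$ on $\mathbb{R}_+$ whose Laplace transform equals the right-hand side of (\ref{eqcharaJ}). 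Then by uniqueness of characteristic functions $P_x(X_t\in\cdot)$ coincides with the convolution of $f(t,x,\cdot)\,dy$ and $\nu_t$, and so has a density given by (\ref{eqconv}).

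For Case (i) ($\Delta>0$) I would verify, by the change of variables carried out at (\ref{laplacenu}), that the target Laplace transform has exactly the form $\bigl(\tfrac{1}{\alpha+1}+\tfrac{\alpha}{\alpha+1}\cdot\tfrac{1}{1-\tfrac{\alpha+1}{\beta}u}\bigr)^{\gamma}$ appearing in Lemma~3.1(ii), with the parameters $(\alpha_1(t),\beta_1(t),\gamma_1)$ as in (\ref{deltapositiv}); checking that these are all positive is immediate because $0<L_1(t)<1$ when $\Delta>0$. Hence $\nu_t=m_{\alpha_1(t),\beta_1(t),\gamma_1}$, which by Lemma~\ref{lmixgamma}(i) is of the form (\ref{mixgamma}), and inserting this into (\ref{eqconv}) yields (\ref{jcirdensity}). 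Case (ii) ($\Delta<0$) is analogous: the algebraic manipulation at (\ref{laplacenu2}) rewrites the target as the same Lemma~3.1(ii) form with $(\alpha_2(t),\beta_2,\gamma_2)$ from (\ref{parameternegativ}), the positivity of $\alpha_2(t)=L_1(t)-1$ being guaranteed by $\sigma^2 d/(2a)>1$, and $\gamma_2>0$ because the sign flip in the exponent cancels the sign of $a-\sigma^2d/2$; this gives (\ref{densitydeltaneg}). For Case (iii) ($\Delta=0$) the target Laplace transform is $\exp\bigl(\alpha u/(\beta-u)\bigr)$ with $(\alpha_3(t),\beta_3)$ from (\ref{paramDelta0}), which by Lemma~3.1(i) is exactly the Laplace transform of the Bessel distribution $\mu_{\alpha_3(t),\beta_3}$; substituting its explicit form (\ref{besseldistru}) into (\ref{eqconv}) yields (\ref{desitydelta0}).

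The only nontrivial point is to check the algebraic identity at each step, especially in Case~(ii) where one needs to track the sign in the exponent $\gamma_2=-c/(a-\sigma^2d/2)>0$ and verify that $1-L_1(t)<0$ is compensated so that all parameters fed into Lemma~3.1 are genuinely positive; I expect this book-keeping to be the main place where an error could creep in. The $x=0$ boundary case is handled simultaneously by using (\ref{cirdensity0}) in place of (\ref{cirdensity}) in the convolution integral, and the $t=0$ case is trivial as the law of $X_0$ is $\delta_x$, which matches the convention $P_x(X_0\in A)=1_A(x)$.
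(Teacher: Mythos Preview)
Your proposal is correct and follows essentially the same approach as the paper: the theorem is simply a summary of the three case analyses in Subsections~4.1--4.3, which you have faithfully reproduced, including the identification of $\nu_t$ via Lemma~3.1 and the density formula via Lemma~\ref{lmixgamma}(i) combined with (\ref{eqconv}). Your additional remarks on parameter positivity and the boundary cases $x=0$, $t=0$ are accurate and add useful detail that the paper leaves implicit.
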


Although the density functions in (\ref{jcirdensity}), (\ref{densitydeltaneg}) and (\ref{desitydelta0}) are essentially different, they do share some similarities. In the following corollary we give a  unified representation of $p(t,x,y)$. 
\begin{corollary}Irrelevant of the the sign of $\Delta=a-\sigma^2d/2$, the transition densities $p(t,x,y)$ of $X$ can be expressed in a unified form as 
\begin{equation}\label{transition}
p(t,x,y)= L(t)f(t,x,y)+\int_{0}^{y}f(t,x,y-z)h(t,z)dz,
\end{equation}
where $L(t)$ is continuous function in $t >0$ which satisfies $0<L(t)<1$ for $t>0$, the function $h(t,z)$ is non-negative and  continuous in $(t,z)\in (0,\infty)\times[0,\infty)$ and satisfies $\int_{\mathbb{R}_+}h(t,z)dz=1-L(t).$
\end{corollary}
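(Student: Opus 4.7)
The plan is to read off explicit formulas for $L(t)$ and $h(t,z)$ in each of the three cases and then verify the claimed properties case by case. In each case the density formula already has the stated shape, so the work is really in checking continuity, the bounds on $L$, and the normalization.

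First I would set, in Case (i) with $\Delta>0$, $L(t):=(1+\alpha_1(t))^{-\gamma_1}$ and $h(t,z):=g_{\alpha_1(t),\beta_1(t),\gamma_1}(z)$, matching formula (\ref{jcirdensity}); in Case (ii) with $\Delta<0$, $L(t):=(1+\alpha_2(t))^{-\gamma_2}$ and $h(t,z):=g_{\alpha_2(t),\beta_2,\gamma_2}(z)$, matching (\ref{densitydeltaneg}); in Case (iii) with $\Delta=0$, $L(t):=e^{-\alpha_3(t)}$ and $h(t,z):=\beta_3 e^{-\alpha_3(t)-\beta_3 z}\sqrt{\alpha_3(t)/(\beta_3 z)}\,I_1\bigl(2\sqrt{\alpha_3(t)\beta_3 z}\bigr)$, matching (\ref{desitydelta0}). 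In all three cases the formula (\ref{transition}) then holds by construction.

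Next I would check $0<L(t)<1$ for $t>0$. In Case (i), $L_1(t)=e^{-at}+(\sigma^2d/(2a))(1-e^{-at})$ satisfies $\sigma^2d/(2a)<L_1(t)<1$ because $\sigma^2d/(2a)<1$ when $\Delta>0$, so $\alpha_1(t)=1/L_1(t)-1>0$ and $\gamma_1>0$, giving $L(t)\in(0,1)$. In Case (ii), $\sigma^2d/(2a)>1$ forces $L_1(t)>1$ for $t>0$, so $\alpha_2(t)=L_1(t)-1>0$, and $\gamma_2>0$, again giving $L(t)\in(0,1)$. In Case (iii), $\alpha_3(t)=(c/a)(1-e^{-at})>0$ for $t>0$, so $L(t)=e^{-\alpha_3(t)}\in(0,1)$. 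Continuity of $L$ in $t>0$ is immediate in all three cases since $\alpha_i(t)$ are smooth in $t$.

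For continuity and non-negativity of $h(t,z)$ on $(0,\infty)\times[0,\infty)$, in Cases (i) and (ii) I would invoke Lemma \ref{lmixgamma}(ii) directly, since $(t,z)\mapsto(\alpha_i(t),\beta_i(t),\gamma_i,z)$ is a continuous map into the domain $D$ on which $g_{\alpha,\beta,\gamma}(x)$ is continuous, and the series (\ref{defineg}) shows $g\ge0$. Case (iii) needs a small additional observation: the factor $\sqrt{1/z}$ in the explicit formula is only an apparent singularity, because using the series expansion of $I_1$ one has
\[
\sqrt{\tfrac{\alpha_3(t)}{\beta_3 z}}\,I_1\!\bigl(2\sqrt{\alpha_3(t)\beta_3 z}\bigr)=\alpha_3(t)\sum_{k=0}^{\infty}\frac{(\alpha_3(t)\beta_3 z)^k}{k!(k+1)!},
\]
so $h(t,z)=\alpha_3(t)\beta_3 e^{-\alpha_3(t)-\beta_3 z}\sum_{k\ge0}(\alpha_3(t)\beta_3 z)^k/(k!(k+1)!)$ is a power series with nonnegative terms, hence non-negative and jointly continuous in $(t,z)\in(0,\infty)\times[0,\infty)$.

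Finally, the normalization $\int_{\mathbb{R}_+}h(t,z)\,dz=1-L(t)$ follows by integrating (\ref{transition}) in $y$: since $p(t,x,\cdot)$ is a probability density and $\int_{\mathbb{R}_+}f(t,x,y)\,dy=1$, Fubini gives
\[
1=L(t)+\int_{0}^{\infty}\!h(t,z)\!\int_{z}^{\infty}\!f(t,x,y-z)\,dy\,dz=L(t)+\int_{0}^{\infty}h(t,z)\,dz.
\]
The only step requiring real care is the $\Delta=0$ subcase, where the $z^{-1/2}$ factor has to be absorbed into $I_1$ via the power series to obtain true joint continuity at $z=0$; everything else is bookkeeping combined with Lemma \ref{lmixgamma}.
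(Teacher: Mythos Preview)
The paper states this corollary without proof, treating it as an immediate consequence of the three explicit density formulas (\ref{jcirdensity}), (\ref{densitydeltaneg}), (\ref{desitydelta0}); your proposal supplies precisely the verification the paper leaves implicit, with the same choice of $L(t)$ and $h(t,z)$ that the paper intends (as confirmed by the later formula (\ref{limitL}) for $\lim_{t\to\infty}L(t)$). Your argument is correct in all three cases, including the handling of the apparent $z^{-1/2}$ singularity in the $\Delta=0$ case and the Fubini-based normalization, so there is nothing to add.
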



\section{Positive Harris recurrence of the BAJD}

It was shown in \cite{MR1994043} (see also \cite{MR2851694}) that the semigroup of any stochastically continuous affine process on the canonical state space $\mathbb{R}^m_+\times \mathbb{R}^{n}$ is a Feller semigroup. Define the semigroup of the BAJD by 
\begin{equation}
T_t f (x):= \int_{\mathbb{R}_{+}} p(t,x,y) f(y)dy,
\end{equation}
where $f: \mathbb{R}_{+} \to \mathbb{R}$ is bounded. Since the BAJD process $X$ is stochastically continuous and affine, thus $(T_t)_{t \ge 0}$ is a Feller semigroup. 

To show the positive Harris recurrence, we need first to prove the regularity property of BAJD. To this aim,  we first analyse the continuity properties of the integral which appears on the right hand side of (\ref{transition}).

\begin{lemma}\label{contF}
Let $f(t,x,y)$ be the transition density of the CIR process given in (\ref{cirdensity}) and $h(t,z)$ be the same as in ($\ref{transition}$). Then the function $F(t,x,y)$ defined by
\begin{equation}\label{defineF}
    F(t,x,y):=\int_{0}^{y}f(t,x,y-z)h(t,z)dz
\end{equation}
is continuous with variables $(t,x,y)\in(0,\infty)\times[0,\infty)\times[0,\infty)$. Moreover if $M>1$ is a constant, then  
 \begin{equation}\label{inqF}
 |F(t,x,y)|\leq C y^{\frac{2a\theta}{\sigma^{2}}} 
 \end{equation}
 for all 
\[
(t,x,y)\in K_{M}:=\{(t,x,y): \frac{1}{M} \leq t\leq M, \ 0\leq x\leq M,  \ 0\leq y \leq\frac{1}{M}\},\]
where $C >0$ is a constant which depends on $M$.\\

\end{lemma}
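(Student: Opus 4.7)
The plan is to rewrite $F$ by the scaling substitution $z=ys$ (valid for $y>0$), which turns the integral into one over the fixed unit interval:
\[
F(t,x,y) = y\int_{0}^{1} f(t,x,y(1-s))\, h(t,ys)\, ds.
\]
With this in hand, both assertions of the lemma will follow from a single uniform estimate on the CIR density near its singularity at $w=0$, combined with the boundedness and continuity of $h$ on compact sets.

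Writing $q:=2a\theta/\sigma^{2}-1>-1$, the series expansion of $I_{q}$ gives $I_{q}(r)\le C_{R}\,r^{q}$ for $r\in[0,R]$. Plugging this into (\ref{cirdensity}) shows that $(v/u)^{q/2}I_{q}(2\sqrt{uv})\le Cv^{q}$, so for $(t,x,w)\in[1/M,M]\times(0,M]\times[0,1/M]$ one obtains $f(t,x,w)\le C(M)\,w^{q}$. The boundary case $x=0$ is handled directly by (\ref{cirdensity0}), giving the same bound $f(t,0,w)\le Cw^{q}$; the same small-argument expansion also shows that $f$ extends continuously to $x=0$ for fixed $w>0$, so $f$ is jointly continuous on $(0,\infty)\times[0,\infty)\times(0,\infty)$. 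By Lemma \ref{lmixgamma}(ii) together with the explicit form of $h$ in (\ref{transition}) (that is, the $g_{\alpha_{i},\beta_{i},\gamma_{i}}$ of Section~4, together with the Case~iii summand involving $I_{1}$), $h$ is continuous on $(0,\infty)\times[0,\infty)$ and hence bounded on $[1/M,M]\times[0,1/M]$ by some constant $C'(M)$.

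The quantitative bound on $K_{M}$ is then immediate:
\[
F(t,x,y)\le y\int_{0}^{1} C\,(y(1-s))^{q}\cdot C'\, ds=\frac{CC'}{q+1}\, y^{q+1}=\widetilde C\, y^{2a\theta/\sigma^{2}},
\]
where $\int_{0}^{1}(1-s)^{q}\,ds=1/(q+1)<\infty$ because $q>-1$. For continuity at a point $(t_{0},x_{0},y_{0})$ with $y_{0}>0$, I would restrict $(t,x,y)$ to a compact neighborhood in $(0,\infty)\times[0,\infty)\times(0,\infty)$; on this neighborhood the integrand in the scaled representation is pointwise continuous in $(t,x,y)$ for each $s<1$ and is dominated by $\widetilde C(1-s)^{q}$, which is integrable on $[0,1]$. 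Dominated convergence then yields $F(t,x,y)\to F(t_{0},x_{0},y_{0})$. Continuity at points of the form $(t_{0},x_{0},0)$ amounts to the statement $F(t,x,y)\to 0$ as $y\to 0$, which is an immediate consequence of the $y^{2a\theta/\sigma^{2}}$-bound just proved.

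I expect the main obstacle to be the genuine singularity of $f(t,x,\cdot)$ at the origin when $q<0$ (i.e.\ when the Feller condition $2a\theta\ge\sigma^{2}$ fails): in that regime both the uniform estimate $f(t,x,w)\le Cw^{q}$ and the joint continuous extension of $f$ to $x=0$ have to be read off carefully from the small-argument asymptotics of $I_{q}$, with particular attention to the patch $x=0$ where $f$ is given by the separate formula (\ref{cirdensity0}) rather than the Bessel expression. Once this $w^{q}$-domination is secured, both parts of the lemma collapse to the elementary integrability of $(1-s)^{q}$ on $[0,1]$.
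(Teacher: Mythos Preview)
Your argument is correct and rests on exactly the same two ingredients as the paper's proof: the local estimate $f(t,x,w)\le C\,w^{q}$ (with $q=2a\theta/\sigma^{2}-1>-1$) coming from the small-argument behaviour of $I_{q}$, and the continuity and local boundedness of $h$. The bound on $K_{M}$ and the continuity at $y=0$ are obtained identically.

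The only difference is in how continuity at points with $y_{0}>0$ is handled. The paper keeps the integral in the original variable $z\in[0,y]$ and copes with the moving upper endpoint by splitting into $[0,y_{0}-2\delta_{2}]$, where $y-z$ is bounded away from $0$ and ordinary dominated convergence applies, and $[y_{0}-2\delta_{2},y]$, a short interval whose contribution is made uniformly $<\varepsilon/3$ via the $w^{q}$-estimate. Your scaling substitution $z=ys$ maps everything onto the fixed interval $[0,1]$ and absorbs both issues at once: the endpoint no longer moves, and the singularity of $f(t,x,y(1-s))$ at $s=1$ is controlled by the integrable majorant $(1-s)^{q}$, so a single application of dominated convergence suffices. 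Your route is a bit more economical; the paper's splitting is more explicit about where the error goes. The substance is the same.
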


\begin{proof}
For simplicity we set $q:=2a\theta/\sigma^{2}-1$ as in (\ref{cirdensity}). Since $h(t,z)$ is continuous in $(t,z)\in(0,\infty)\times[0,\infty)$, thus there exists a constant $c_1>0$ depending on  $M$ such that
\begin{equation}\label{inqh}
    |h(t,z)|\leq c_1 \quad \mbox{for} \quad\frac{1}{M}\leq t\leq M, \quad 0\leq z \leq \frac{1}{M}.
\end{equation}
Therefore if $(t,x,y)\in K_{M}$, we have
\begin{equation}\label{AbsolutF}
    |F(t,x,y)|\leq c_1\int_{0}^{y}f(t,x,y-z)dz.
\end{equation}
According to (\ref{cirdensity}) and (\ref{cirdensity0}) we have
\begin{equation}\label{absolutdensity}
    |f(t,x,y-z)|\leq c_2|y-z|^q \quad \quad \mbox{if} \quad (t,x,y-z)\in K_{M} \ \mbox{ and  } \ y \neq z,
\end{equation}
where $c_2>0$ is a constant depending on $M$.\\
It follows from (\ref{AbsolutF}) and (\ref{absolutdensity}) that 
\begin{equation}\label{Festi}
    |F(t,x,y)|\leq c_1c_2\int_{0}^{y}|y-z|^qdz=\frac{c_1c_2}{q+1} y^{q+1} \le c_3y^{q+1} 
\end{equation}
for $(t,x,y)\in K_{M}$, if we set $c_3:=c_1c_2/(q+1) $. Thus (\ref{inqF}) is proved. Noting that $F(t,x,0)=0$ for $t>0$ and $x\geq 0$, the continuity of the function $F$ at points $(t,x,0)$ is an immediate consequence of the estimate (\ref{Festi}). \\
We now proceed to prove the continuity of $F$ at other points. Suppose that $t_0> 0$, $x_0\geq 0$ and $y_0> 0$ are fixed. Let $\epsilon >0$ be arbitrary. We choose $\delta_{1}>0$ small enough such that $y_0-2\delta_{1}>0$ and $t_0-\delta_{1}> 0$. As in (\ref{inqh}) and (\ref{absolutdensity}) there exist constants $c_4, c_5>0$, which depend on $\delta_1$, such that
\begin{equation}\label{inq2h}
    |h(t,z)| \leq c_4 \quad \mbox{for} \quad t\in[t_0-\delta_1,t_0+\delta_1] , \ \ z\in [0, y_0+\delta_1]
\end{equation}
and
\begin{equation}\label{inq2transionfun}
    |f(t,x,y-z)|\leq c_5 |y-z|^q
\end{equation}
$\ for \ t\in[t_0-\delta_1,t_0+\delta_1], \ x\in[0, x_0+\delta_1]$ and $0< y-z\leq y_0+\delta_1.$
Set
\begin{equation*}\label{Kdelta2}
  K_{\delta_2}:=[t_0-\delta_{2},t_0+\delta_{2}]\times[0,x_0+\delta_{2}]\times[y_0-\delta_{2},y_0+\delta_{2}].
\end{equation*}
We choose $\delta_2>0$ small enough such that $\delta_2<\delta_1$ and $c_4c_5(3\delta_2)^{q+1}/(q+1)< \epsilon/3$. If $(t,x,y)\in K_{\delta_2}$ then it holds
\begin{eqnarray}
 \Big |\int_{y_0-2\delta_2}^{y}f(t,x,y-z)h(t,z)dz\Big| &\leq & c_4c_5\int_{y_0-2\delta_2}^{y}(y-z)^q dz \nonumber \\
   &=& \frac{c_4c_5}{q+1}(y-y_0+2\delta_2)^{q+1} \nonumber \\
  &\leq & \frac{c_4c_5}{q+1}(3\delta_2)^{q+1} <\frac{\epsilon}{3}. \label{Inqintegral}
\end{eqnarray}
If $(t,x,y)\in K_{\delta_2}$ and $0\leq z \leq y_0-2\delta_{2}$, then $\delta_{2}\leq y-z\leq y_0+\delta_{2}$ and by (\ref{inq2h}) and (\ref{inq2transionfun}) we have
\begin{eqnarray}\label{inq2fh}
       |f(t,x,y-z)h(t,z)| &\leq & c_4c_5|y-z|^q \nonumber \\
        &\leq& c_4c_5 (|\delta_2|^q+|y_0+\delta_2|^q).
     \end{eqnarray}
Since for fixed $z\in[0,y_0-2\delta_{2}]$ the function $f(t,x,y-z)h(t,z)$ is continuous in $(t,x,y)\in K_{\delta_2}$, it follows from $(\ref{inq2fh})$ and dominated convergence theorem that
\begin{equation*}
    F_2(t,x,y):=\int^{y_0-2\delta_2}_{0}f(t,x,y-z)h(t,z)dz
\end{equation*}
is a continuous function in $(t,x,y)\in K_{\delta_2}$. This implies the existence of a constant $\delta$ with $0<\delta <\delta_2$ such that 
\begin{equation}\label{continuityF2}
    |F_2(t,x,y)-F_2(t_0,x_0,y_0)|< \frac{\epsilon}{3},
\end{equation}
if $(t,x,y)\in K_\delta:=[t_0-\delta,t_0+\delta]\times[0 \vee(x_0-\delta),x_0+\delta]\times[y_0-\delta,y_0+\delta]$. Thus it follows from $(\ref{Inqintegral})$ and $(\ref{continuityF2})$ that
\begin{equation*}
      |F(t,x,y)-F(t_0,x_0,y_0)|< \epsilon
\end{equation*}
for $(t,x,y)\in K_\delta$.
The continuity of the function $F$ at $(t_0,x_0,y_0)$ is proved.
\end{proof}


Following \cite[Chapter 20]{MR1876169} we give the definition of a regular Feller process on $\mathbb{R}_{+}$.
\begin{definition}
Consider a Feller process $Y=(Y_t)_{t \geq 0}$ with state space $\big(\mathbb{R}_{+}, \mathcal{B}(\mathbb{R}_{+}) \big)$ and distributions $P_x$, $x \in \mathbb{R}_+$. The process is said to be regular if there exist a locally finite measure $\rho$ on $\mathbb{R}_{+}$ and a continuous function $(t,x,y) \mapsto p_{t}(x,y)>0$ on $ (0,\infty)\times \mathbb{R}_{+}^{2}$ such that
\[
P_{x}\{Y_t \in B\}=\int_{B}p_t(x,y)\rho (dy), \quad x \in \mathbb{R}_{+}, \ B \in \mathcal{B}(\mathbb{R}_{+}), \ t>0.
\]
\end{definition}
\noindent The measure $\rho$ is called the "supporting measure" of the process. It is unique up to an equivalence (see \cite[page 399]{MR1876169}). \\

Proceeding as in the proof of \cite[ Proposition 2.2]{MR3167406}, we define a new measure $\eta$ on $\big(\mathbb{R}_{+}, \mathcal{B}(\mathbb{R}_{+}) \big)$ as
\begin{equation}\label{measureeta}
\eta(dx):=\kappa(x)dx,
\end{equation}
where
\[
\kappa(x)=\begin{cases} x^{\frac{2a \theta}{\sigma^{2}}-1}, \quad & 0 \le x \le 1, \\ 1, \quad & x >1.
\end{cases}
\]
 Then the transition densities of the BAJD process with respect to the new measure $\eta$ is given by
\begin{equation}\label{transition eta}
\tilde{p}(t,x,y) = \frac{p(t,x,y)}{\kappa(y)}, \quad t>0, \ x\ge 0,\  y >0.
\end{equation}

\begin{theorem}
The transition densities $\tilde{p}(t,x,y)$ of the BAJD process with respect to the measure $\eta$ satisfies
\[
0<\tilde{p}(t,x,y)<\infty, \quad t>0,\ x \ge 0,\ y\ge 0\]
and is continuous in $(t,x,y) \in (0,\infty)\times [0,\infty) \times [0,\infty)$. Thus the BAJD is a regular Feller process on the state space $\mathbb{R}_+$.
\end{theorem}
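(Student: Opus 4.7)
The plan is to use the decomposition $p(t,x,y) = L(t)f(t,x,y) + F(t,x,y)$ from (\ref{transition}), handling $F$ via Lemma \ref{contF} and unifying the two CIR formulas (\ref{cirdensity}), (\ref{cirdensity0}) into a single power-series representation that exposes the $y\to 0$ behavior. Inserting the series expansion of $I_q$ into (\ref{cirdensity}), the prefactor $(v/u)^{q/2}$ cancels against $(uv)^{q/2}$ from the leading power of $I_q$, giving
\[
f(t,x,y) = \rho\, e^{-u-v}\, v^{q}\sum_{k=0}^{\infty}\frac{(uv)^{k}}{k!\,\Gamma(q+k+1)},
\]
valid for all $x \ge 0$ (only the $k=0$ term survives when $x=0$, recovering (\ref{cirdensity0})). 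Since $v = \rho y$ and $\kappa(y) = y^q$ on $[0,1]$, dividing by $\kappa$ removes the singular prefactor:
\[
\frac{f(t,x,y)}{\kappa(y)} = \rho^{\,q+1}\, e^{-u-v}\sum_{k=0}^{\infty}\frac{(uv)^{k}}{k!\,\Gamma(q+k+1)} \qquad (0< y\le 1).
\]
The right-hand side is a series of strictly positive terms that converges locally uniformly on $(0,\infty)\times[0,\infty)\times[0,\infty)$, hence the quotient is continuous and strictly positive and extends by the same formula to $y=0$. For $y>1$ one has $\kappa \equiv 1$, and $f$ itself is continuous and strictly positive by the same representation.

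For the convolution term, Lemma \ref{contF} already gives continuity of $F$ on $(0,\infty)\times[0,\infty)^{2}$. On $\{y>0\}$, $F/\kappa$ is continuous since $\kappa$ is continuous and non-vanishing. As $y\to 0$, the estimate $|F(t,x,y)|\le C y^{q+1}$ from (\ref{inqF}) yields $F(t,x,y)/\kappa(y)\le C y \to 0$ locally uniformly, so $F/\kappa$ extends continuously by zero to the boundary $\{y=0\}$.

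Assembling the pieces,
\[
\tilde p(t,x,y) = L(t)\,\frac{f(t,x,y)}{\kappa(y)} + \frac{F(t,x,y)}{\kappa(y)}
\]
is jointly continuous on $(0,\infty)\times[0,\infty)\times[0,\infty)$, finite, and strictly positive (the first summand is strictly positive because $L(t)>0$ and $f/\kappa>0$; the second is nonnegative). The measure $\eta$ is locally finite since $q = 2a\theta/\sigma^{2}-1 > -1$ ensures $\int_{0}^{1} y^{q}\,dy<\infty$. Combined with the already-recalled Feller property of $(T_{t})_{t\ge 0}$ from the affine-process literature, this verifies the definition of a regular Feller process on $\mathbb{R}_{+}$.

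The main obstacle is the boundary behavior at $y=0$: $p(t,x,y)$ carries a $y^{q}$ singularity that must be precisely absorbed by $\kappa$, and seeing this cleanly requires both the Bessel-series rearrangement to expose the $v^{q}$ factor in $f$ and the quantitative upper bound (\ref{inqF}) for $F$. Without both ingredients one cannot conclude the joint continuity of $\tilde p$ up to the boundary, which is exactly what regularity demands.
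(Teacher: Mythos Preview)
Your proof is correct and follows essentially the same approach as the paper: the same decomposition $\tilde{p} = L(t)\,f/\kappa + F/\kappa$, the same use of Lemma~\ref{contF} for the continuity and the bound (\ref{inqF}) on $F$, and the same $F(t,x,y)/\kappa(y) \le Cy \to 0$ argument at the boundary $y=0$. The one difference is that where the paper cites \cite[Proposition~2.2]{MR3167406} for the continuity and strict positivity of $\tilde{f}=f/\kappa$, you supply the argument directly via the Bessel-series rearrangement $f(t,x,y) = \rho e^{-u-v} v^{q}\sum_{k\ge 0}(uv)^{k}/\bigl(k!\,\Gamma(q+k+1)\bigr)$, which makes the proof self-contained and in fact transparently unifies (\ref{cirdensity}) and (\ref{cirdensity0}).
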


\begin{proof}
From (\ref{transition}) we can write the transition density of the BAJD process with respect to the measure $\eta$ as
\begin{eqnarray}
 \tilde{p}(t,x,y)  &=& L(t)\frac{f(t,x,y)}{\kappa(y)}+ \frac{F(t,x,y)}{\kappa(y)} \nonumber  \\
   &=& L(t) \tilde{f}(t,x,y) +\frac{F(t,x,y)}{\kappa(y)} \label{transition wrt eta}
\end{eqnarray}
where $\tilde{f}(t,x,y)$ is the transition density of CIR with respect the new measure $\eta$ and $F$ is defined in (\ref{defineF}). We have already shown in \cite[Proposition 2.2]{MR3167406} that \begin{equation}\label{positivef}
0<\tilde{f}(t,x,y) <\infty, \quad t>0, \ x\ge 0,\  y \ge 0,
\end{equation}
and $(t,x,y) \mapsto \tilde{f}(t,x,y)$ is a continuous function on $(0,\infty) \times \mathbb{R}_{+}^{2}$.\\
From Lemma \ref{contF} we know that the function $F(t,x,y)$ appearing in the second summand in (\ref{transition wrt eta}) is continuous on $(0,\infty) \times \mathbb{R}_{+}^{2}$ and
\begin{equation*}
|F(t,x,y)|\leq C y^{\frac{2a \theta}{\sigma^{2}}} , \qquad \mbox{if} \ (t,x,y)\in K_{M},
\end{equation*}
where $C>0$ is a constant depending on $M$.
Now it is clear that
\begin{equation*}
    0\leq \lim_{y\rightarrow 0} \frac{F(t,x,y)}{\kappa (y)} \leq \lim_{y\rightarrow 0} C|y|=0.
\end{equation*}
Since $F(t,x,0)=0$ and $L(t)$ is continuous in $t>0$, it follows that
the function $ \tilde{p}(t,x,y)$ is continuous at points $(t_0,x_0,0)$, if $t_0>0$ and $x_0 \ge 0$. The continuity of the function $ \tilde{p}(t,x,y)$ at other points is also clear, because all the functions appearing in (\ref{transition wrt eta}) are continuous and $0<\kappa(y)<\infty$ for $y>0$. Noting that $ 0<L(t)<1$ for $t> 0$, we get
\begin{equation*}
0<\tilde{p}(t,x,y) <\infty \quad \mbox{for all } \ (t,x,y) \in (0,\infty) \times \mathbb{R}_{+}^{2}.
\end{equation*}
Therefore the BAJD process is a regular Feller process with $\eta$ as the supporting measure.
\end{proof}

For the following definitions we refer to \cite{MR1234294} .
\begin{definition}\label{defiharris} Consider a time-homogeneous Markove process $Y=(Y_t)_{t \geq 0}$ with the state space $\mathbb{R}_{+}$ and distributions $P_x$, $x \in \mathbb{R}_+$. \\
(i) $Y$ is said to be Harris recurrent if for some $\sigma$-finite measure $\mu$
\begin{equation}\label{eqharrisdefi}
P_x \big(\int_{0}^{\infty} \mathbf{1}_{A}(Y_s)ds = \infty\big)=1,
\end{equation}
for any $x \in \mathbb{R}_{+}$ and $A \in \mathcal{B}(\mathbb{R}_{+})$ with $\mu(A)>0$. It was shown in \cite{MR580144} that if $Y$ is Harris recurrent then it possesses a unique (up to a renormalization) invariant measure. If the invariant measure is finite, then the process $Y$ is called positive Harris recurrent. \\
(ii) $Y$ is said to be uniformly transient if
\begin{equation}\label{transientine}
\sup_{x}E_{x}\Big [\int_{0}^{\infty} \mathbf{1}_{K}(Y_s)ds \Big ]<\infty
\end{equation}
for every compact $K \subset \mathbb{R}_{+}$.
\end{definition}
\begin{lemma}\label{nottrasient}
The BAJD is not uniformly transient.
\end{lemma}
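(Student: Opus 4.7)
The plan is to exhibit a compact set $K \subset \mathbb{R}_{+}$ for which $E_x\bigl[\int_0^\infty \mathbf{1}_K(X_s)\,ds\bigr] = \infty$ for every $x \ge 0$, which directly contradicts (\ref{transientine}). The driving observation is that the explicit characteristic function (\ref{eqchara}) admits a non-trivial limit as $t \to \infty$, and this limit is independent of $x$.

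Taking $u = i\xi$ with $\xi \in \mathbb{R}$ (so $i\xi \in \mathcal{U}$) and letting $t \to \infty$ in (\ref{eqchara}), the factor $e^{-at}$ vanishes and the right-hand side converges pointwise to a function $\Phi(i\xi)$ which does not depend on $x$; for $\Delta \ne 0$,
$$\Phi(u) = \Bigl(1 - \tfrac{\sigma^2}{2a}u\Bigr)^{-\tfrac{2a\theta}{\sigma^2}} \Bigl(\tfrac{d - \sigma^2 du/(2a)}{d - u}\Bigr)^{\tfrac{c}{a - \sigma^2 d/2}},$$
with the analogous formula (the second factor replaced by $\exp\bigl(cu/(a(d-u))\bigr)$) when $\Delta = 0$. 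This limit is continuous at $\xi = 0$ with value $1$, so by L\'evy's continuity theorem it is the characteristic function of a probability measure $\pi$ on $\mathbb{R}$, and $X_t \Rightarrow \pi$ under $P_x$ for every $x \ge 0$. Since $X_t \ge 0$ almost surely and $\mathbb{R}_{+}$ is closed, Portmanteau gives $\pi(\mathbb{R}_{+}) = 1$.

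Because $\pi$ is a probability measure with at most countably many atoms, one may choose $N > 0$ large enough that $\pi([0, N]) \ge 3/4$ and $\pi(\{N\}) = 0$. Then $[0, N]$ is a $\pi$-continuity set, so by Portmanteau $P_x(X_s \in [0, N]) \to \pi([0, N]) \ge 3/4$ as $s \to \infty$, for every $x \ge 0$. In particular, for each $x$ there exists $T = T(x) > 0$ with $P_x(X_s \in [0, N]) \ge 1/2$ whenever $s \ge T$, and Tonelli's theorem yields
$$E_x\Bigl[\int_0^\infty \mathbf{1}_{[0, N]}(X_s)\,ds\Bigr] = \int_0^\infty P_x(X_s \in [0, N])\,ds \ge \int_T^\infty \tfrac{1}{2}\,ds = \infty.$$
Taking the supremum over $x$ contradicts (\ref{transientine}) for the compact set $K = [0, N]$. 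No step is a serious obstacle: the $t \to \infty$ asymptotics of (\ref{eqchara}) are transparent in all cases of $\Delta$, and the remainder is a standard weak-convergence argument; the only point requiring care is choosing $N$ at a continuity point of the distribution function of $\pi$, which excludes only a countable set of values.
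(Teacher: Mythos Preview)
Your proof is correct and takes a genuinely different route from the paper's.

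The paper argues via the explicit transition density derived in Section~4: it writes $p(t,x,y)=L(t)f(t,x,y)+F(t,x,y)$, observes from (\ref{jcirdensity}), (\ref{densitydeltaneg}), (\ref{desitydelta0}) that $\lim_{t\to\infty}L(t)>0$, so that $p(t,x,y)\ge\lambda f(t,x,y)$ for large $t$, and then invokes the fact (proved in the authors' earlier work on the CIR process) that $\int_0^\infty f(t,x,y)\,dt=\infty$ for $x>0$ and $y$ in a suitable interval. By Tonelli this forces $E_x\bigl[\int_0^\infty \mathbf{1}_{[0,m]}(X_t)\,dt\bigr]=\infty$ for one fixed $x>0$, contradicting uniform transience.

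Your argument bypasses the density formula entirely: you read off directly from the closed-form characteristic function (\ref{eqchara}) that $X_t$ converges in law to a nondegenerate limit $\pi$ on $\mathbb{R}_+$ (this is exactly the convergence recorded later in Remark~\ref{reminv}, due to Keller--Ressel and Steiner), pick a $\pi$-continuity set $[0,N]$ of mass at least $3/4$, and use Portmanteau plus Tonelli to get the divergent occupation time. This is more self-contained---it needs neither the density computations of Section~4 nor the external CIR lemma---and it yields the conclusion for every $x\ge 0$ rather than a single $x>0$. The paper's approach, on the other hand, puts the newly derived density formula to immediate use and makes the comparison with the underlying CIR dynamics explicit. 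Both routes are short; yours is the more elementary one.
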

\begin{proof} Let $m>0$, $K:=[0,m]$ and $x\in(0,\infty)$ be fixed. Then
\begin{eqnarray*}
  E_x \Big [ \int_{0}^{\infty}\mathbf{1}_{[0,m]}(X_t) dt \Big ] &=&  \int_{0}^{\infty}E_x\big[\mathbf{1}_{[0,m]}(X_t)\big] dt  \\
   &=& \int_{0}^{\infty}\int_{0}^{m} p(t,x,y) dy dt\\
  &=& \int_{0}^{m} dy \int_{0}^{\infty} p(t,x,y) dt.
  \end{eqnarray*}
From (\ref{transition}) and (\ref{defineF}) we know that 
\[
p(t,x,y)=L(t)f(t,x,y)+F(t,x,y).
\]
It follows from (\ref{jcirdensity}), (\ref{densitydeltaneg}) and (\ref{desitydelta0}) that 
\begin{equation}\label{limitL}
0<\lim_{t \to \infty} L(t) =\begin{cases}\big(\frac{\sigma^2d}{2a}\big)^{\frac{c}{a-\frac{\sigma^2d}{2}}} , \quad  \mbox{if} \ \Delta \neq 0, \\
 e^{-\frac{c}{a}}, \quad\quad\quad\quad \  \mbox{if} \ \Delta=0 .
\end{cases}
\end{equation}
Since $F(t,x,y)$ is non-negative, thus there exists large enough $T>0$ such that 
$$p(t,x,y) \ge \lambda  f(t,x,y) \quad \mbox{for } \ t \ge T,$$
where $\lambda>0$ is a constant. Let $\epsilon >0$ be small enough. According to \cite[Lemma 2.4]{MR3167406} we know that for any $x>0$ and $y\in [\epsilon, m]$ it holds 
$$\int_0^\infty f(t,x,y)dt=\infty.$$ Therefore 
\begin{align*}
E_x\Big [\int_{0}^{\infty}\mathbf{1}_{[0,m]}(X_t) dt\Big ]= &\int_{0}^{m} dy \int_{0}^{\infty} p(t,x,y) dt\\
\ge & \int_{\epsilon}^{m} dy \int_{R}^{\infty} \lambda f(t,x,y)dt=\infty.
\end{align*}
This proves that the BAJD is not uniformly transient.
\end{proof}
\begin{theorem} \label{ThHarris}
The BAJD is Harris recurrent.
\end{theorem}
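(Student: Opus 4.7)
The plan is to invoke a standard dichotomy for regular Feller processes on a locally compact second countable Hausdorff space: such a process is \emph{either} uniformly transient \emph{or} Harris recurrent with respect to its supporting measure (see, e.g., Theorem~20.17 in \cite{MR1876169}). Since the two previous results in this section have already established that the BAJD is a regular Feller process on $\mathbb{R}_+$ with supporting measure $\eta$ defined in (\ref{measureeta}), and Lemma~\ref{nottrasient} rules out uniform transience, the only remaining alternative in the dichotomy forces Harris recurrence.

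Concretely, I would proceed as follows. First, observe that $\mathbb{R}_+$ equipped with its usual topology is locally compact and second countable, so the abstract framework of \cite[Ch.~20]{MR1876169} applies. Second, recall that the previous theorem showed that the transition kernel admits a strictly positive, jointly continuous density $\tilde p(t,x,y)$ with respect to the $\sigma$-finite measure $\eta$, which is exactly what is required for the BAJD to be regular in the sense of the definition recalled above. Third, Lemma~\ref{nottrasient} supplies a compact set $K=[0,m]$ and a starting point $x>0$ with
\[
E_x\Big[\int_0^\infty \mathbf{1}_{K}(X_t)\,dt\Big]=\infty,
\]
which rules out condition (\ref{transientine}) and therefore excludes uniform transience.

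Finally, I would apply the dichotomy to conclude Harris recurrence in the sense of Definition~\ref{defiharris}(i), taking $\mu=\eta$: for every $x\in\mathbb{R}_+$ and every Borel set $A\subset\mathbb{R}_+$ with $\eta(A)>0$,
\[
P_x\Big(\int_0^\infty \mathbf{1}_A(X_t)\,dt=\infty\Big)=1.
\]
I do not anticipate any real obstacle here, since the nontrivial work has already been done in proving regularity (via the strict positivity and continuity of $\tilde p$) and in verifying non-transience (via the divergence of $\int_0^\infty f(t,x,y)\,dt$ for the CIR density, combined with the pointwise lower bound $p(t,x,y)\ge\lambda f(t,x,y)$ for large $t$ obtained from (\ref{limitL})). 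The only thing to check carefully is that the hypotheses of the dichotomy theorem in \cite{MR1876169} are met in our setting, which amounts to matching the notion of regular Feller process used there with the one recalled in the previous definition; this is a routine verification.
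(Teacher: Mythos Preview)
Your proposal is correct and follows essentially the same approach as the paper: both invoke the regular Feller dichotomy \cite[Theorem~20.17]{MR1876169}, use the previously established regularity of the BAJD with supporting measure $\eta$, and apply Lemma~\ref{nottrasient} to exclude uniform transience. Your write-up is simply a more detailed elaboration of the paper's two-sentence proof.
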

\begin{proof}
We have shown that the BAJD is a regular Feller process with the measure $\eta$, which is defined in (\ref{measureeta}), as a supporting measure. It follows from Lemma \ref{nottrasient} and \cite[Theorem 20.17]{MR1876169} that the BAJD is Harris recurrent and the measure $\eta$ satisfies (\ref{eqharrisdefi}).
\end{proof}

\begin{remark}\label{reminv}
Since the BAJD is a Harris recurrent Feller process, it follows from \cite[Theorem 20.18]{MR1876169} that the BAJD has a locally finite invariant measure $\pi$ which is equivalent to the supporting measure $\eta$. Moreover every $\sigma$ -finite invariant measure of the BAJD agrees with $\pi$ up to a renormalization. The existence and uniqueness of an invariant probability measure for the BAJD has already been proved in \cite{MR2779872} (see also \cite{MR2390186}). Thus we can assume $\pi$ to be a probability measure. The characteristic function of $\pi$ was given in \cite{MR2390186} and has the form 
\begin{equation}\label{eqcharainv}
\int_{\mathbb{R}_+}e^{uz} \pi(dz)=\begin{cases}\big(1-\frac{\sigma^2}{2a}u\big)^{-\frac{2a\theta}{\sigma^2}}  \cdot \Big (\frac{d-\frac{\sigma^2du}{2a}}{d-u}\Big )^{\frac{c}{a-\frac{\sigma^2d}{2}}}, \qquad  \  \mbox{if} \ \Delta \neq 0, \\
 \big(1-\frac{\sigma^2}{2a}u\big)^{-\frac{2a\theta}{\sigma^2}} \cdot \exp \Big (\frac{cu}{a(d-u)}\Big),\ \ \qquad  \ \  \mbox{if} \ \Delta = 0. 
\end{cases}\\
\end{equation}
\end{remark}

\begin{corollary}\label{coroinv}
The BAJD is positive Harris recurrent. Its unique invariant probability measure $\pi$ is absolute continuous with respect to the Lebesgue measure and thus has a density function $l(\cdot)$, namely $\pi (dy)=l(y)d y, \ y \in \mathbb{R}_+$. If $\Delta \neq 0$, then we have
\[
l(y)=\Big(\frac{\sigma^2d}{2a}\big)^{\frac{c}{a-\frac{\sigma^2d}{2}}}\Gamma \big(y;\frac{2a\theta}{\sigma^2}, \frac{\sigma^2}{2a}\big)+\int_{0}^{y}\Gamma \big(y-z; \frac{2a\theta}{\sigma^2}, \frac{\sigma^2}{2a}\big)h(z)dz, \quad y \ge 0,
\]
where $ \Gamma \big(y; 2a\theta/\sigma^2, \sigma^2 / (2a)\big)$ denotes the density function of the Gamma distribution with parameters $2a\theta/\sigma^2$ and $\sigma^2 / (2a)$, and
\[
h(z)=\begin{cases}g_{\frac{2a}{\sigma^2d}-1,\frac{2a}{\sigma^2},\frac{c}{a-\frac{\sigma^2d}{2}}}(z),  \quad \ \qquad  \rm{if} \ \Delta > 0, \\
g_{\frac{\sigma^2d}{2a}-1,d,-\frac{c}{a-\frac{\sigma^2d}{2}}}(z), \ \ \qquad  \ \  \rm{if} \ \Delta < 0,
\end{cases}\\
\]
with $g$ defined in (\ref{defineg}). 
 If $\Delta = 0$, then we have 
 \[
 l(y)=e^{-\frac{c}{a}}\Gamma \big(y; \frac{2a\theta}{\sigma^2}, \frac{\sigma^2}{2a}\big)+\int_{0}^{y}\Gamma \big(y-z; \frac{2a\theta}{\sigma^2}, \frac{\sigma^2}{2a}\big) d e^{-\frac{c}{a}-dz}\sqrt{\frac{c}{a d z}} \cdot I_1 \big(2\sqrt{\frac{cdz}{a}}\big)dz
\]
for $y \ge 0$.
\end{corollary}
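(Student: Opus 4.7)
My plan splits into two independent parts. For the positive Harris recurrence claim, I would simply combine Theorem~\ref{ThHarris} (Harris recurrence of the BAJD) with Remark~\ref{reminv}, which recalls from \cite{MR2779872} the existence of an invariant probability measure $\pi$; since for a Harris recurrent process the invariant measure is unique up to a multiplicative constant, the existence of any finite invariant measure forces positive Harris recurrence.

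For the density formula, the strategy is the stationary analog of Section~4. I would factor the characteristic function (\ref{eqcharainv}) of $\pi$ as a product of two Laplace transforms of probability measures on $\mathbb{R}_+$, identify each factor explicitly, and conclude that $\pi$ is the convolution of the two corresponding measures. The first factor $(1-\sigma^2u/(2a))^{-2a\theta/\sigma^2}$ is immediately the Laplace transform of the Gamma distribution $\Gamma(y; 2a\theta/\sigma^2,\sigma^2/(2a))$, which is the well-known stationary distribution of the CIR process and is just the $t\to\infty$ limit of (\ref{eqcharaCIR}). The second factor has to be matched, via Lemma~3.1, against a mixture of Bessel distributions (for $\Delta \neq 0$) or a Bessel distribution itself (for $\Delta = 0$).

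More concretely: for $\Delta > 0$ I would rewrite the second factor in (\ref{eqcharainv}) as
\[
\Big(L + (1-L)\,\frac{1}{1-u/d}\Big)^{c/\Delta} \quad\text{with}\quad L = \frac{\sigma^2 d}{2a} \in (0,1),
\]
and recognize it via Lemma~3.1(ii) as the Laplace transform of $m_{2a/(\sigma^2 d)-1,\,2a/\sigma^2,\,c/\Delta}$, i.e.\ the $t\to\infty$ limit of the parameter choice (\ref{deltapositiv}). For $\Delta < 0$ the same manipulation (now with $L>1$) yields the stationary analog of (\ref{parameternegativ}). For $\Delta = 0$ the second factor is of the form in Lemma~3.1(i) with $\alpha = c/a$ and $\beta = d$. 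In each case Lemma~\ref{lmixgamma} provides the explicit decomposition of the mixture as a point mass at $0$ plus the density $g_{\alpha,\beta,\gamma}$ (respectively the explicit Bessel density when $\Delta=0$), and convolving with the Gamma density of the first factor produces the closed form $l(y)$ claimed in the corollary.

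The only step that deserves a moment of verification is that the atomic coefficient $(1/(1+\alpha))^\gamma$ supplied by Lemma~\ref{lmixgamma} collapses to the common value $(\sigma^2 d/(2a))^{c/\Delta}$ for both parametrizations used in the cases $\Delta > 0$ and $\Delta < 0$; this is a one-line algebraic check. I do not anticipate any genuine obstacle: once the second factor of (\ref{eqcharainv}) has been matched against Lemma~3.1 in each of the three cases, the statement reduces to a direct convolution computation entirely parallel to (\ref{jcirdensity}), (\ref{densitydeltaneg}), and (\ref{desitydelta0}).
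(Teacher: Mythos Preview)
Your proposal is correct and follows essentially the same approach as the paper: positive Harris recurrence from Theorem~\ref{ThHarris} combined with the finite invariant measure of Remark~\ref{reminv}, and the density formula by factoring the characteristic function (\ref{eqcharainv}) into a Gamma piece and a (mixture of) Bessel distribution piece exactly as in Section~4, then convolving. In fact your write-up is more detailed than the paper's own proof, which simply says ``apply the same method which we used in Section 4'' and leaves the parameter identification to the reader; your explicit verification that the atomic coefficient collapses to $(\sigma^2 d/(2a))^{c/\Delta}$ in both cases $\Delta>0$ and $\Delta<0$ is a useful sanity check the paper omits.
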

\begin{proof}
We apply the same method which we used in Section 4 to find the transition densities of the BAJD. Since the characteristic function of $\pi$ is given by (\ref{eqcharainv}) and noting that the first term on the right hand side of (\ref{eqcharainv}) corresponds to the characteristic function of a Gamma distribution, we can represent the measure $\pi$ as a convolution of a Gamma distribution with a probability measure $\nu$. If $\Delta=0$ the measure $\nu$ is a Bessel distribution, otherwise it is a mixture of Bessel distributions. By identifying the parameters of the Gamma distribution and the Bessel or mixture of Bessel distributions, we get an explicit formula of the density function of $\pi$. 
\end{proof}

\begin{corollary}
Let $X=(X_t)_{t \ge 0}$ be the BAJD defined by (\ref{jcir}).  Then for any $f \in \mathcal{B}_b(\mathbb{R}_+)$ we have 
 \[
 \frac{1}{t}\int_0^t f(X_s)ds \ \to \ \int_{\mathbb{R}_+}f(x)\pi(dx) \quad a.s.
 \]
as $t \to \infty$, where $\pi$ is the unique invariant probability measure of the BAJD.
\end{corollary}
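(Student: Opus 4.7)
The plan is to obtain this as a direct consequence of the ratio ergodic theorem for positive Harris recurrent Markov processes, together with the results already established in the preceding section. All the substantive work has already been done: Theorem \ref{ThHarris} shows that the BAJD is Harris recurrent with the measure $\eta$ of \eqref{measureeta} as a supporting measure, and Corollary \ref{coroinv} identifies the unique (up to normalization) invariant measure as a probability measure $\pi$ with an explicit density, so the process is in fact positive Harris recurrent.

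First I would recall the statement of the ergodic theorem for positive Harris recurrent processes, as given in \cite[Theorem 20.21]{MR1876169}: if $Y=(Y_t)_{t\ge 0}$ is a positive Harris recurrent Markov process on a Polish state space with invariant probability measure $\mu$, then for every $g \in L^1(\mu)$ and every starting point $x$,
\[
\frac{1}{t}\int_0^t g(Y_s)\,ds \ \longrightarrow\ \int g\,d\mu \qquad P_x\text{-a.s.}
\]
as $t\to\infty$.

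Next I would verify the hypotheses for the BAJD. The process $X$ lives on the Polish space $\mathbb{R}_+$ and, by Theorem \ref{ThHarris} combined with Remark \ref{reminv} and Corollary \ref{coroinv}, it is positive Harris recurrent with invariant probability measure $\pi$. Since $f \in \mathcal{B}_b(\mathbb{R}_+)$ and $\pi$ is a probability measure, $f$ is automatically in $L^1(\pi)$ with $\|f\|_{L^1(\pi)} \le \|f\|_\infty$. Applying the cited theorem with $Y=X$, $\mu=\pi$ and $g=f$ then yields
\[
\frac{1}{t}\int_0^t f(X_s)\,ds \ \longrightarrow\ \int_{\mathbb{R}_+} f(x)\,\pi(dx) \qquad P_x\text{-a.s.},
\]
for every initial state $x \in \mathbb{R}_+$, which is the required convergence.

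There is no real obstacle here; the corollary is a clean application of the general Harris ergodic theorem, and the only point worth checking is that the integrability hypothesis $f \in L^1(\pi)$ is automatic from the boundedness of $f$ and the finiteness of $\pi$. If desired, one could also observe that the result extends to any $f \in L^1(\pi)$, but the bounded case is what the corollary asks for.
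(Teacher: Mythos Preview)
Your proposal is correct and follows exactly the same approach as the paper: both invoke positive Harris recurrence (Corollary \ref{coroinv}) together with \cite[Theorem 20.21]{MR1876169}. The paper's proof is just the one-line citation, while you spell out the verification of the hypotheses, but the argument is identical.
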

\begin{proof}
The above convergence follows from Corollary \ref{coroinv} and \cite[Theorem 20.21]{MR1876169}.
\end{proof}
\section{Exponential ergodicity of the BAJD}
Let $\|\cdot\|_{TV}$ denote the total-variation norm for signed measures on $\mathbb{R}_+$, namely 
\[
\|\mu\|_{TV}=\sup_{A \in \mathcal{B}(\mathbb{R}_+)} \{|\mu(A)| \}.\]

The total variation norm for signed  measures on $\mathbb{R}_+$ is a special case of the  the norm $\|\cdot\|_h$, which is defined by 
\[\|\mu\|_h=\sup_{|g|\leq h}\Big|\int_{\mathbb{R}_+} g d\mu \Big|\]
 for a function $h $ on $\mathbb{R}_+$ with $h \ge 1$. Obviously it holds $ \|\mu\|_{TV} \le \|\mu\|_h $, given that $h \ge 1$.
 
Let $P^{t}(x,\cdot):=P_x(X_t\in \cdot)$ be the distribution of the BAJD process $X$ at time $t$ given that $X_0=x$ with $x \ge 0$. In this section we will find a function $h\ge1$ such that the BAJD is $h$-exponentially ergodic, namely there exist a constant $\beta \in (0,1)$ and a finite-valued function $B(\cdot)$ such that
\[
\|P^{t}(x,\cdot) -\pi\|_{h}\le B(x)\beta^t, \quad \forall t>0,  \quad x \in \mathbb{R}_+,
\]
where $\pi$ is the unique invariant probability measure of the BAJD.

\smallskip  We first show the existence of a Foster-Lyapunov function, which is a sufficient condition for the exponential ergodicity to hold. Let $\mathcal{A}$ denote the generator of the BAJD, then
\[
\mathcal{A}g(x)=\frac{1}{2}\sigma^2xg''(x)+(a\theta-ax)g'(x)+cd\int_{\mathbb{R}_{+}}\big(g(x+y)-g(x)\big)e^{-dy}dy
\]
for $g \in D(\mathcal{A})$.
\begin{lemma}\label{Lyapunov}
The function $V(x)=\exp(\gamma x)$ with small enough $\gamma>0$ is a Foster-Lyapunov function for the BAJD, namely there exist constants $k,M \in (0,\infty)$ such that 
\[
\mathcal{A}V(x) \le -k V(x)+M, \quad \forall x \in \mathbb{R}_{+}.
\]
\end{lemma}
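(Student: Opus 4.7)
The plan is a direct calculation: apply $\mathcal{A}$ to $V(x) = e^{\gamma x}$ term by term, extract an overall factor of $V(x)$, and then exploit that for $\gamma$ small enough the coefficient of $x$ inside the bracket is negative.

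First I would compute $V'(x) = \gamma V(x)$ and $V''(x) = \gamma^2 V(x)$, which handle the diffusion part trivially. For the integral (jump) term, using $V(x+y) - V(x) = e^{\gamma x}(e^{\gamma y} - 1)$, I would impose the condition $0 < \gamma < d$ so that
\[
cd \int_0^\infty \bigl(e^{\gamma(x+y)} - e^{\gamma x}\bigr) e^{-dy}\,dy = cd\, e^{\gamma x}\Bigl(\tfrac{1}{d-\gamma} - \tfrac{1}{d}\Bigr) = \tfrac{c\gamma}{d-\gamma}\,V(x).
\]
Assembling everything yields
\[
\mathcal{A}V(x) = V(x)\Bigl[\bigl(\tfrac{\sigma^2 \gamma^2}{2} - a\gamma\bigr)x + a\theta\gamma + \tfrac{c\gamma}{d-\gamma}\Bigr].
\]

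Next I would further restrict $\gamma$ to satisfy $0 < \gamma < \min\{d,\, 2a/\sigma^2\}$, so that the coefficient $\tfrac{\sigma^2\gamma^2}{2} - a\gamma$ equals $-\alpha$ for some $\alpha > 0$. Setting $C := a\theta\gamma + c\gamma/(d-\gamma) > 0$, we have $\mathcal{A}V(x) = V(x)\bigl[-\alpha x + C\bigr]$. To produce the desired inequality $\mathcal{A}V(x) \le -kV(x) + M$ with $k > 0$, it suffices to pick any $k \in (0, \infty)$ (say $k = 1$) and observe that
\[
\mathcal{A}V(x) + kV(x) = e^{\gamma x}\bigl[-\alpha x + C + k\bigr],
\]
which tends to $-\infty$ as $x \to \infty$ and is continuous on $[0,\infty)$. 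Hence
\[
M := \sup_{x \ge 0} e^{\gamma x}\bigl[-\alpha x + C + k\bigr] < \infty,
\]
and this is the constant required.

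There is no real obstacle: the diffusion and drift contributions are immediate, and the only mild subtlety is choosing $\gamma$ small enough so that (a) the jump integral converges, i.e.\ $\gamma < d$, and (b) the $x$-coefficient is negative, i.e.\ $\gamma < 2a/\sigma^2$. Once both hold, the boundedness of $e^{\gamma x}(-\alpha x + C + k)$ on $[0,\infty)$ is automatic and gives a finite $M$, completing the Foster--Lyapunov estimate.
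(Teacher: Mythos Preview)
Your proof is correct and follows essentially the same route as the paper: both compute $\mathcal{A}V(x)=e^{\gamma x}\bigl[(\tfrac{\sigma^2\gamma^2}{2}-a\gamma)x+a\theta\gamma+\tfrac{c\gamma}{d-\gamma}\bigr]$, require $\gamma$ small enough that the $x$-coefficient is negative, and then bound $\mathcal{A}V+kV$ from above. The only cosmetic difference is that the paper explicitly locates a threshold $x_0$ and writes down a concrete $M$, whereas you invoke continuity and the limit at infinity to assert $\sup_{x\ge 0}e^{\gamma x}[-\alpha x+C+k]<\infty$; these are the same argument.
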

\begin{proof}We have 
\begin{align*}
\mathcal{A}V(x)=&\frac{1}{2}\sigma^2\gamma^2xe^{\gamma x}+(a\theta-ax)\gamma e^{\gamma x}+cd\int_{\mathbb{R}_{+}}\big(e^{\gamma (x+y)}-e^{\gamma x}\big)e^{-dy}dy \\
=& \frac{1}{2}\sigma^2\gamma^2xe^{\gamma x}+(a\theta-ax)\gamma e^{\gamma x}+\frac{c\gamma}{d-\gamma} e^{\gamma x} \\
=& \gamma e^{\gamma x} \cdot \Big( \big(\frac{1}{2}\sigma^2\gamma-a\big)x+a\theta+\frac{c}{d-\gamma} \Big).
\end{align*}
If $\gamma>0$ is small enough, then $\sigma^2\gamma/2-a<0$ and there exists $x_0>0$ with 
\[
\big(\frac{1}{2}\sigma^2\gamma-a\big)x_0+a\theta+\frac{c}{d-\gamma}=-\frac{1}{\gamma}.
\]
Thus  we have for $x \in [x_0, \infty)$
\[
\mathcal{A}V(x) = \gamma e^{\gamma x} \cdot \Big( \big(\frac{1}{2}\sigma^2\gamma-a\big)x+a\theta+\frac{c}{d-\gamma} \Big) \le -e^{\gamma x}
\] 
and for $x \in [0, x_0]$
\[
\mathcal{A}V(x) = \gamma e^{\gamma x} \cdot \Big( \big(\frac{1}{2}\sigma^2\gamma-a\big)x+a\theta+\frac{c}{d-\gamma} \Big) \le \gamma e^{\gamma x_0} \cdot (a\theta+\frac{c}{d-\gamma} ) .
\]
It follows for all $x \in \mathbb{R}_{+}$ 
\[
\mathcal{A}V(x) \le -e^{\gamma x}+e^{\gamma x_0}+\gamma e^{\gamma x_0} \cdot (a\theta+\frac{c}{d-\gamma} ) \le -V(x)+M
\]
with 
\[M:=e^{\gamma x_0}+\gamma e^{\gamma x_0} \cdot \big(a\theta+\frac{c}{d-\gamma} \big) <\infty.
\]
\end{proof}

\begin{lemma} \label{Lyapunov2}  Let the constants $\gamma$, $k$ and $M$, as well as $V(x)=\exp(\gamma x)$ be the same as in Lemma \ref{Lyapunov}. Then the BAJD satisfies 
\[
E_x[V(X_t)] \le e^{-kt}V(x)+\frac{M}{k}
\]
or equivalently
\[
\int_{\mathbb{R}_+}V(y)p(t,x,y)dy \le e^{-kt}V(x)+\frac{M}{k}
\]
for all $x \in \mathbb{R}_+, \ t>0$.
\end{lemma}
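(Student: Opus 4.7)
The plan is to integrate the pointwise Foster--Lyapunov inequality $\mathcal{A}V(x)\le -kV(x)+M$ supplied by Lemma \ref{Lyapunov} along trajectories of the BAJD by applying It\^o's formula to $e^{kt}V(X_t)$ and then taking expectations, after a suitable localization to handle the unboundedness of $V$.

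The first step is to apply the It\^o formula for jump diffusions to $V(X_t)=e^{\gamma X_t}$ along (\ref{jcir}). Writing the jump part of $J$ in terms of its Poisson random measure and compensating against $\nu(dy)=cde^{-dy}\,dy$ yields the standard decomposition
\[
V(X_t)=V(x)+\int_0^t \mathcal{A}V(X_s)\,ds + M_t,
\]
where $M_t$ is a local martingale consisting of a Brownian integral with integrand $\sigma\gamma e^{\gamma X_s}\sqrt{X_s}$ and a compensated jump integral with integrand $e^{\gamma X_{s-}}(e^{\gamma y}-1)$. Applying the product rule to $e^{kt}V(X_t)$ and inserting the bound $kV+\mathcal{A}V\le M$ from Lemma \ref{Lyapunov} gives
\[
e^{kt}V(X_t)\le V(x)+\int_0^t e^{ks}M\,ds + N_t = V(x)+\frac{M}{k}(e^{kt}-1)+N_t,
\]
where $N_t$ is again a local martingale.

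To take expectations I would localize at the hitting times $\tau_n:=\inf\{t\ge 0:X_t\ge n\}$. On $[0,t\wedge\tau_n]$ the process $V(X_\cdot)$ is bounded by $e^{\gamma n}$, so the Brownian integrand is bounded and the jump integrand $e^{\gamma X_{s-}}(e^{\gamma y}-1)$ is square-integrable against $cde^{-dy}\,dy$ provided $2\gamma<d$, a condition we may freely impose by shrinking $\gamma$ in Lemma \ref{Lyapunov}. Thus $N_{\cdot\wedge\tau_n}$ is a true martingale of zero mean and
\[
E_x\bigl[e^{k(t\wedge\tau_n)}V(X_{t\wedge\tau_n})\bigr]\le V(x)+\frac{M}{k}(e^{kt}-1).
\]
Since the BAJD is non-explosive (strong existence on all of $[0,\infty)$ was recalled from \cite{MR2584896} in the introduction), $\tau_n\uparrow\infty$ almost surely, so for fixed $t$ eventually $t\wedge\tau_n=t$. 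Letting $n\to\infty$ and using Fatou's lemma on the left, then dividing through by $e^{kt}$, yields
\[
E_x[V(X_t)]\le e^{-kt}V(x)+\frac{M}{k}(1-e^{-kt})\le e^{-kt}V(x)+\frac{M}{k},
\]
which is the asserted inequality; the equivalent density form follows by definition of $p(t,x,y)$.

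The main obstacle is the passage from the localized identity to the unlocalized one: because $V$ is unbounded one cannot simply let $n\to\infty$ inside an equality. The Fatou argument above circumvents this, but its validity rests on two ingredients implicit in the paper's setup, namely non-explosion of $X$ (so that $\tau_n\to\infty$ a.s.) and the integrability furnished by $2\gamma<d$ (which makes the compensated Poisson integral genuinely square-integrable after stopping at $\tau_n$). Both are available without additional work, so the rest of the argument reduces to the routine algebra displayed above.
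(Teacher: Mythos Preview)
Your proof is correct and follows exactly the paper's approach: apply It\^o's formula to $e^{kt}V(X_t)$, use the drift bound $\mathcal{A}V\le -kV+M$, and take expectations. The only difference is that the paper simply writes ``applying It\^o's formula and then taking the expectation'' without justifying that the local martingale part has zero mean, whereas you supply the missing localization via $\tau_n$ and Fatou's lemma; your version is therefore the more rigorous rendition of the same argument.
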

\begin{proof}
Let $V(x)=\exp(\gamma x)$ and $g(x,t):=V(x) \cdot \exp(kt)=\exp(\gamma x+kt)$, where the constants $\gamma$ and $k$ are the same as in Lemma \ref{Lyapunov}. Then $g_x=\gamma  \exp(\gamma x+kt)$, $g_{xx}=\gamma^2 \exp(\gamma x+kt)$ and $g_t=k \exp(\gamma x+kt)$. By applying It\^o's formula and then taking the expectation, we get for all $x \in \mathbb{R}_+, \ t>0,$
\begin{align*}
&e^{kt}E_x[V(X_t)]-V(x) \\
=&E_x[g(X_t,t)]-E_x[g(X_0,0)]\\
=&E_x\Big[\int_0^t\big(e^{ks} \cdot\mathcal{A}V(X_s)+ke^{ks} \cdot V(X_s)\big)ds\Big]  \\
\le &E_x\Big[\int_0^t\big(e^{ks} \cdot \big(-kV(X_s)+M\big)+ke^{ks} \cdot V(X_s)\big)ds\Big] \\
= & \mathbb{E}_x\Big[\int_0^tMe^{ks}ds\Big] = \frac{M}{k}e^{kt}- \frac{M}{k}\le  \frac{M}{k}e^{kt}.
\end{align*}
Thus for $x \in \mathbb{R}_+, \ t >0,$
\[
E_x[V(X_t)] \le e^{-kt}V(x)+\frac{M}{k}.
\]
\end{proof}

Applying the main results of \cite{MR1234295} and  Lemma \ref{Lyapunov2}, we get the following theorem.

\begin{theorem} \label{theoremexp}
Let $h(x):=1+\exp(\gamma x)$ with the constant $\gamma >0$ small enough. Then the BAJD is $h$-exponentially ergodic,  namely there exist constants $\beta \in (0,1) $ and $C\in(0,\infty)$ such that 
\begin{equation}\label{exerjcir}
\|P^{t}(x,\cdot) -\pi\|_{h}\le C\big(e^{\gamma x}+1\big)\beta^t, \quad  t>0,  \quad x \in \mathbb{R}_{+}.
\end{equation}
\end{theorem}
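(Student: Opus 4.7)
The plan is to invoke the continuous-time $V$-uniform ergodicity theorem of Down, Meyn and Tweedie (Theorem 5.2 of \cite{MR1234295}), which delivers $V$-exponential ergodicity once one verifies four hypotheses for the underlying Markov process: non-explosion, $\psi$-irreducibility, aperiodicity with compact sets petite, and a drift condition of the form $\mathcal{A}V(x) \le -c V(x) + d\mathbf{1}_C(x)$ for some closed petite set $C$. The function on which the whole analysis is built is the Foster-Lyapunov function $V(x) = e^{\gamma x}$ of Lemma \ref{Lyapunov}, with $\gamma > 0$ chosen small enough to make that lemma applicable.

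First I would recast Lemma \ref{Lyapunov} in the petite-set form. Since $V(x) \to \infty$ as $x \to \infty$, for any large enough $R > 0$ we have $M \le (k/2)V(x)$ on $[R,\infty)$; choosing $C := [0,R]$ one obtains $\mathcal{A}V(x) \le -(k/2) V(x) + M' \mathbf{1}_C(x)$ with $M' := M + (k/2)e^{\gamma R}$. Next I would verify the structural hypotheses. Non-explosion is automatic because $X$ is defined for all times as the strong solution of \eqref{jcir}. By Section 5, the BAJD is a regular Feller process whose transition density $\tilde{p}(t,x,y)$ with respect to the supporting measure $\eta$ is strictly positive and jointly continuous on $(0,\infty)\times[0,\infty)\times[0,\infty)$; this yields $\eta$-irreducibility and aperiodicity of every skeleton chain. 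Joint continuity of $\tilde{p}$ together with the Feller property makes $X$ a $T$-process in the sense of Meyn and Tweedie, whence by the standard equivalence (see \cite{MR1234294}) every compact subset of $\mathbb{R}_+$ is petite.

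With these ingredients, Theorem 5.2 of \cite{MR1234295} immediately yields constants $\beta \in (0,1)$ and a finite constant $D>0$ such that
\[
\|P^{t}(x,\cdot) - \pi\|_{V} \le D V(x)\beta^{t}, \qquad t>0,\ x\in\mathbb{R}_+.
\]
Since $h(x) = 1 + V(x)$ and $V(x)\ge 1$, we have $h\le 2V$, hence $\|\mu\|_h \le 2\|\mu\|_V$ for any finite signed measure $\mu$. Applying this to $\mu = P^t(x,\cdot)-\pi$ gives $\|P^{t}(x,\cdot) - \pi\|_{h} \le 2D e^{\gamma x}\beta^{t} \le 2D(1+e^{\gamma x})\beta^{t}$, which is \eqref{exerjcir} with $C = 2D$. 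The main technical obstacle is the verification that all compact sets are petite: the Foster-Lyapunov drift inequality is essentially handed to us by Lemma \ref{Lyapunov}, but the petite-set property requires combining the strict positivity and joint continuity of the transition density (Theorem 5.4) with the Feller and Harris properties through the $T$-process machinery, and it is this structural step that carries the weight of the proof.
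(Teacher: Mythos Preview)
Your proposal is correct and rests on the same three pillars as the paper's proof: the Foster--Lyapunov inequality of Lemma~\ref{Lyapunov}, the strict positivity and joint continuity of the transition density established in Section~5, and the Meyn--Tweedie stability machinery. The difference lies only in how you package the application of that machinery. The paper follows the proof of \cite[Theorem~6.1]{MR1234295} explicitly: it passes to a $\delta$-skeleton chain $Y_n^\delta=X_{n\delta}$, uses Lemma~\ref{Lyapunov2} to obtain the discrete drift condition $E_x[V(Y_{n+1})\mid Y_0,\dots,Y_n]\le e^{-\delta k}V(Y_n)+M/k$, verifies $\lambda$-irreducibility, strong aperiodicity (by exhibiting $[0,1]$ as a small set), and that compacts are petite (via the Feller property and \cite[Theorem~3.4(ii)]{MR1174380}), then applies the discrete-time \cite[Theorem~6.3]{MR1174380} and lifts back to continuous time. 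You instead invoke the continuous-time $V$-uniform ergodicity theorem directly, recasting the drift inequality in the localized form $\mathcal{A}V\le -(k/2)V+M'\mathbf{1}_{[0,R]}$ and using the $T$-process characterization from \cite{MR1234294} to get petiteness of compacts. Your route is marginally more streamlined because it avoids the explicit skeleton-chain bookkeeping, while the paper's route has the advantage of making the small-set construction completely concrete. Either way, the substantive analytic work---the Lyapunov computation and the density regularity of Section~5---is identical.
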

\begin{proof}
Basically, we follow the proof of \cite[Theorem 6.1]{MR1234295}. For any $\delta>0$ we consider the $\delta$-skeleton chain $Y^{\delta}_n:=X_{n\delta},  \ n \in \mathbb{Z}_+$. Then $(Y^{\delta}_n)_{ n \in \mathbb{Z}_+}$ is a Markov chain with transition kernel $p(\delta,x,y)$ on the state space $\mathbb{R}_+$ and the law of the $Y_n$ (started from $Y_0=x$) is given by $P^{\delta n}(x, \cdot)$. It is easy to see that invariant measures for the BAJD process $(X_t)_{t\ge 0}$ are also invariant measures for $(Y^{\delta}_n)_{ n \in \mathbb{Z}_+}$. Thus the probability measure $\pi$ in Corollary \ref{coroinv} is also an invariant probability measure for the chain $(Y^{\delta}_n)_{ n \in \mathbb{Z}_+}$. 

Let $V(x)=\exp(\gamma x)$ be the same as in Lemma \ref{Lyapunov2}. It follows from  the Markov property and Lemma \ref{Lyapunov2} that 
\[
E_x[V(Y_{n+1})|Y_0, Y_1, \cdots, Y_n] = \int_{\mathbb{R}_+}V(y)p(\delta,Y_n,y)dy \le e^{-\delta k}V(Y_n)+\frac{M}{k},
\]
where $k$ and $M$ are positive constants. If we set $V_0:=V$ and $V_n:=V(Y_n)$, $n\in \mathbb{N}$, then 
\[
E_x[V_1] \le  e^{-\delta k}V_0(x)+\frac{M}{k}
\]
and 
\[
E_x[V_{n+1}|Y_0, Y_1, \cdots, Y_n]  \le e^{-\delta k}V_n+\frac{M}{k}, \quad n\in \mathbb{N}.
\]

Now we proceed to show that the chain $(Y^{\delta}_n)_{ n \in \mathbb{Z}_+}$ is $\lambda$-irreducible, strong aperiodic, and all compact subsets of $\mathbb{R}_+$ are petite for the chain $(Y^{\delta}_n)_{ n \in \mathbb{Z}_+}$. 

``$\lambda$-irreducibility": We show that the Lebesgue measure $\lambda$ on $\mathbb{R}_+$ is an irreducibility measure for $(Y^{\delta}_n)_{ n \in \mathbb{Z}_+}$. Let $A\in \mathcal{B}(\mathbb{R}_+)$ and $\lambda(A)>0$, then 
\[
P[Y_1 \in A|Y_0 =x]=P_x[X_{\delta}\in A]=\int_{A}p(\delta,x,y)dy>0,
\]
since $p(\delta,x,y)>0$ for any $x \in \mathbb{R}_+$ and $y>0$. This shows that the chain $(Y^{\delta}_n)_{ n \in \mathbb{Z}_+}$ is irreducible with $\lambda$ being an irreducibility measure.

``Strong aperiodicity" (see \cite[p.561]{MR1174380} for a definition):  To show the strong aperiodicity of $(Y_n)_{n \in \mathbb{N}_0}$, we need to find a set $C \in \mathcal{B}(\mathbb{R}_+)$, 
a probability measure $\nu$ with $\nu(C)=1$, and $\epsilon >0$ such that 
\begin{equation}\label{ergo1}
L(x,C)>0, \qquad x \in \mathbb{R}_+
\end{equation}
and 
\begin{equation}\label{ergo2}
P_x(Y_1\in A) \ge \epsilon \cdot \nu(A), \quad x \in C, \quad A \in \mathcal{B}(\mathbb{R}_+),
\end{equation}
where $L(x,C):=\mathbb{P}_x(Y_m \in C \ \rm{for} \ \rm{some} \ \it{m} \in \mathbb{N})$. To this end set $C:=[0,1]$ and $g(y):=\inf_{x \in [0,1]} p(\delta,x,y)$, $y>0$. Since for fixed $y>0$ the function $p(\delta,x,y)$ strictly positive and continuous in $x \in [0,1]$, thus we have $g(y)>0$ and $0<\int_{(0,1]}g(y)dy\le 1$. Define 
\[
\nu(A):=\frac{1}{\int_{(0,1]}g(y)dy}\int_{A \cap(0,1]}g(y)dy, \qquad A \in \mathcal{B}(\mathbb{R}_+).
\]
Then for any $x\in[0,1]$ and $A \in \mathcal{B}(\mathbb{R}_+)$ we get 
\[
P_x(Y_1\in A)=\int_{A}p(\delta,x,y)dy \ge \int_{A \cap(0,1]}g(y)dy =\nu(A) \int_{(0,1]}g(y)dy,
\]
so (\ref{ergo2}) holds with $\epsilon:=\int_{(0,1]}g(y)dy$. 

Obviously 
\[
L(x,[0,1]) \ge P_x(Y_1\in [0,1])= P_x(X_{\delta} \in [0,1])=\int_{[0,1]}p(\delta,x,y)dy>0
\]
for all $x \in \mathbb{R}_+$, which verifies (\ref{ergo1}).

``Compact subsets are petite": We have shown that $\lambda$ is an irreducibility measure for $(Y^{\delta}_n)_{ n \in \mathbb{Z}_+}$. According to \cite[Theorem 3.4(ii)]{MR1174380}, to show that all compact sets are petit, it suffices to prove the Feller property of $(Y^{\delta}_n)_{ n \in \mathbb{Z}_+}$, but this follows from the fact that $(Y^{\delta}_n)_{ n \in \mathbb{Z}_+}$ is a skeleton chain of the BAJD process $(X_t)_{ t  \ge0}$, which possess the Feller property.

According to \cite[Theorem 6.3]{MR1174380} (see also the proof of \cite[Theorem 6.1]{MR1174380}), the probability measure $\pi$ is the only invariant probability measure of the chain $(Y^{\delta}_n)_{ n \in \mathbb{Z}_+}$ and there exist constants $\beta \in (0,1) $ and $C\in(0,\infty)$ such that 
\[
\|P^{\delta n}(x,\cdot) -\pi\|_{h}\le C\big(e^{\gamma x}+1\big)\beta^n, \quad   n \in \mathbb{Z}_+,  \quad x \in \mathbb{R}_{+},
\]
where $h:=V+1=\exp(\gamma x)+1$.

Then we can proceed as in \cite[p.536]{MR1234295} and get the inequality (\ref{exerjcir}). 
\end{proof}

Since $ \|\mu\|_{TV} \le \|\mu\|_h $, it follows immediatly the follwing corollary.
\begin{corollary} \label{Cexpergodic}
The BAJD is exponentially ergodic, namely  there exist constants $\beta \in(0,\infty) $ and $C\in (0,\infty)$ such that 
\begin{equation}
\|P^{t}(x,\cdot) -\pi\|_{TV}\le C\big(e^{\gamma x}+1\big)\beta^t, \quad \forall t >0,  \quad x \in \mathbb{R}_{+}.
\end{equation}
\end{corollary}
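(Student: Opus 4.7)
The plan is to derive Corollary \ref{Cexpergodic} as an immediate consequence of Theorem \ref{theoremexp}. The key observation, already recorded at the beginning of Section~6, is that for any function $h \ge 1$ on $\mathbb{R}_+$ and any signed measure $\mu$ on $\mathbb{R}_+$ one has
\[
\|\mu\|_{TV} = \sup_{A \in \mathcal{B}(\mathbb{R}_+)} |\mu(A)| = \sup_{|g|\le 1}\Big|\int g\, d\mu\Big| \le \sup_{|g|\le h}\Big|\int g\, d\mu\Big| = \|\mu\|_h,
\]
where the inequality is justified by enlarging the class of test functions from $\{|g|\le 1\}$ to $\{|g|\le h\}$, which is legitimate since $h \ge 1$.

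Applying this to $\mu = P^{t}(x,\cdot) - \pi$ with the specific choice $h(x) = 1 + e^{\gamma x}$ from Theorem \ref{theoremexp}, I immediately obtain
\[
\|P^{t}(x,\cdot) - \pi\|_{TV} \le \|P^{t}(x,\cdot) - \pi\|_{h} \le C\bigl(e^{\gamma x}+1\bigr)\beta^{t}
\]
for all $t > 0$ and $x \in \mathbb{R}_+$, where $\beta \in (0,1)$ and $C \in (0,\infty)$ are the constants supplied by Theorem \ref{theoremexp}. This is exactly the claimed exponential ergodicity in total variation.

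There is no real obstacle here; the work has all been done in establishing the Foster--Lyapunov inequality (Lemma \ref{Lyapunov}), the moment estimate (Lemma \ref{Lyapunov2}), and the irreducibility/aperiodicity/petite-set verifications inside the proof of Theorem \ref{theoremexp}. The corollary is essentially a statement about comparing norms and requires no additional probabilistic input.
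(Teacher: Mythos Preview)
Your proof is correct and matches the paper's approach exactly: the paper simply notes that $\|\mu\|_{TV} \le \|\mu\|_h$ (stated just before the corollary) and deduces the result immediately from Theorem~\ref{theoremexp}. The only minor quibble is that the equality $\sup_{A}|\mu(A)| = \sup_{|g|\le 1}\bigl|\int g\,d\mu\bigr|$ is off by a factor depending on convention, but since both quantities are dominated by $\|\mu\|_h$ this does not affect the argument.
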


\begin{remark} In \cite{MR3167406} we proved that the CIR process is positive Harris recurrent. If we allow the parameter $c=0$, then all results of this section still hold and thus are also true for the CIR process.  In particular Theorem \ref{theoremexp} is also true for the CIR process. In this case the unique invariant probability measure of the CIR process is the Gamma distribution $\Gamma\big(2a\theta/\sigma^2, \sigma^2/(2a)\big)$ and has the characteristic function $\big(1-\sigma^2u/(2a)\big)^{-2a\theta/\sigma^2}$.
\end{remark}

\par\bigskip\noindent
{\bf Acknowledgements.} The research was supported by the Research Programm "DAAD - Transformation: Kurzma{\ss}nahmen 2012/13 Programm". This research was also carried out with the support of CAS - Centre for Advanced Study, at the Norwegian Academy of Science and Letter, research program SEFE.

\bibliographystyle{gSSR}

\end{document}